\documentclass[10pt]{article}
\bibliographystyle{siam}
\usepackage{amscd}
\usepackage{amsmath}
\usepackage{amsfonts}
\usepackage{amssymb}
\usepackage{graphicx}
\usepackage{url}
\usepackage{hyperref}
\usepackage{enumerate}
\usepackage{amsthm}		

\newcommand{\matrxinline}[1]{{[#1]}}

\setcounter{MaxMatrixCols}{16}


\newcommand{\0}{{\bm 0}}
\newcommand{\1}{{\bf 1}}

\newcommand{\A}{\bm{A}}

\newcommand{\B}{\bm{B}}

\newcommand{\Bmatr}[1]{\begin{bmatrix}\displaystyle #1\end{bmatrix}}

\newcommand{\C}{\bm{C}}
\newcommand{\Dc}{{\cal D}}
\newcommand{\D}{\bm{D}}
\newcommand{\Delt}{\bm{\Delta}}

\newcommand{\E}{\bm{E}}

\newcommand{\Fc}{{\cal F}}

\renewcommand{\H}{\bm{H}}

\newcommand{\I}{\bm{I}}





\newcommand{\Pc}{{\cal P}}

\renewcommand{\P}{\bm{P}}

\newcommand{\Q}{\bm{Q}}



\newcommand{\Sc}{{\cal S}}

\newcommand{\Xc}{{\cal X}}
\newcommand{\Yc}{{\cal Y}}

\newcommand{\an}[1]{\begin{align}#1\end{align}}
\newcommand{\ab}[1]{\begin{align*}#1\end{align*}}

\newcommand{\av}{\bm{a}}
\newcommand{\desclist}[1]{\begin{description}#1\end{description}}


\newcommand{\bm}[1]{{\bf #1}}

\newcommand{\bv}{\bm{b}}

\newcommand{\cc}{^{\!*}\,}	

\newcommand{\case}[1]{\left\{ \begin{array}{ll} #1 \end{array}\right.}  



\newcommand{\diaginline}[1]{\mbox{ \bf diag}[{#1}]}

\newcommand{\diagm}[1]{ { \mbox{\bf diag}\matrx{#1} } }

\newcommand{\enumlist}[1]{\begin{enumerate}#1\end{enumerate}}

\newcommand{\eqdef}{:=}

\newcommand{\e}{\bm{e}}

\newcommand{\oldversion}[1]{}

\newcommand{\goesto}{\rightarrow}

\newcommand{\hide}[1]{}	
\newcommand{\temphide}[1]{}	

\newcommand{\matrx}[1]{{\left[ \stackrel{}{#1}\right]}}

\newcommand{\modd}{\!\!\! \mod}

\newcommand{\nbn}{n \times n}
\newcommand{\norm}[1]{{\|#1\|}}

\newcommand{\pr}{\protect}

\newcommand{\ra}{\rightarrow}




\newcommand{\suchthat}{\colon}

\newcommand{\then}{\Longrightarrow} 
\newcommand{\thus}{\then}

\newcommand{\tr}{\!^\top}

\newcommand{\union}{\cup}

\newcommand{\uv}{\bm{u}}

\newcommand{\vv}{\bm{v}}

\newcommand{\x}{\bm{x}}

\newcommand{\y}{\bm{y}}

\newfont{\gilfont}{cmsy10 scaled\magstep0}
\newcommand{\Reals}{\mathbb{R}} 





\newtheorem{Theorem}{Theorem}

\newtheorem{Definition}{Definition}

\newtheorem{Theorem:}[Theorem]{Theorem:}
\newtheorem{Conjecture:}[Theorem]{Conjecture:}
\newtheorem{Corollary:}[Theorem]{Corollary:}
\newtheorem{Definition:}[Theorem]{Definition:}
\newtheorem{Lemma:}[Theorem]{Lemma:}
\newtheorem{Paradox:}[Theorem]{Paradox:}
\newtheorem{Principle:}[Theorem]{Principle:}
\newtheorem{Proposition:}[Theorem]{Proposition:}
\newtheorem{Recursion:}[Theorem]{Recursion:}
\newtheorem{Result:}[Theorem]{Result:}

\newtheorem{Theorem-InOrder}[Theorem]{Theorem}
\newtheorem{Conjecture-InOrder}[Theorem]{Conjecture}
\newtheorem{Corollary-InOrder}[Theorem]{Corollary}
\newtheorem{Definition-InOrder}[Theorem]{Definition}
\newtheorem{Lemma-InOrder}[Theorem]{Lemma}
\newtheorem{Paradox-InOrder}[Theorem]{Paradox}
\newtheorem{Principle-InOrder}[Theorem]{Principle}
\newtheorem{Proposition-InOrder}[Theorem]{Proposition}
\newtheorem{Recursion-InOrder}[Theorem]{Recursion}
\newtheorem{Result-InOrder}[Theorem]{Result}
\newtheorem{Remark-InOrder}[Theorem]{Remark}
\newtheorem{Counterexample-InOrder} [Theorem]{Counterexample}
\newtheorem{Goal-InOrder} [Theorem]{Goal}

\newtheorem{TheoremNum}[Theorem]{Theorem}

\newtheorem{CorollaryNum}[Theorem]{Corollary}

\newtheorem{LemmaNum}[Theorem]{Lemma}

\newtheorem{PropositionNum}[Theorem]{Proposition}

\newcommand{\Remark}[1]{\begin{Remark-InOrder}\protect{\rm #1}\end{Remark-InOrder}}

\newcommand{\AB}{\A^{(1-t)} \circ \B^{(t)}}
\newcommand{\abv}{\av^{(1-t)} \circ \bv^{(t)}}

\newcommand{\citet}{\cite}
\newcommand{\citep}{\cite}
\newcommand{\citealt}{\cite}
\newcommand{\citeauthor}{\cite}
\newcommand{\citeyearpar}{\cite}

\newcommand{\PS}{Property 1}
\newcommand{\eqcc}{\eqref{eq:i=j} and \eqref{eq:i!=j}}
\newcommand{\TFY}{two-fold irreducibility}
\newcommand{\TFI}{two-fold irreducible}

\raggedbottom
\sloppy
\pagestyle{myheadings}
\markboth {\sc Lee Altenberg} {\sc L. Altenberg \ / \  Strict Log-Convexity of the Spectral Radius}	

\title{
A Sharpened Condition for Strict Log-Convexity of the Spectral Radius via the Bipartite Graph
}
\author{Lee Altenberg}

\begin{document}
\maketitle
\begin{abstract}
Friedland (1981)\hide{\citet{Friedland:1981}} showed that for a nonnegative square matrix $\A$, the spectral radius $r(e^\D \A)$ is a log-convex functional over the real diagonal matrices $\D$.  He showed that for fully indecomposable $\A$, $\log r(e^\D \A)$ is strictly convex over $\D_1, \D_2$ if and only if $\D_1-\D_2 \neq c \ \I$ for any $c \in \Reals$.  Here the condition of full indecomposability is shown to be replaceable by the weaker condition that $\A$ and $\A\tr\A$ be irreducible, which is the sharpest possible replacement condition.  Irreducibility of both $\A$ and $\A\tr\A$ is shown to be equivalent to irreducibility of $\A^2$ and $\A\tr\A$, which is the condition for a number of strict inequalities on the spectral radius found in Cohen, Friedland, Kato, and Kelly (1982)\hide{\citet{Cohen:Friedland:Kato:and:Kelly:1982}}.  Such `two-fold irreducibility' is equivalent to joint irreducibility of $\A, \A^2, \A\tr\A$, and $\A\A\tr$, or in combinatorial terms, equivalent to the directed graph of $\A$ being strongly connected and the simple bipartite graph of $\A$ being connected. Additional ancillary results are presented.
\end{abstract}
\section{Introduction}
We begin with a theorem of Friedland on the log-convexity of the spectral radius of a nonnegative matrix (`superconvexity' as Kingman \citealt{Kingman:1961:Convexity} called it).
\begin{TheoremNum}[Friedland Theorem 4.2 \protect{\citet{Friedland:1981}}]\label{Theorem:Friedland}
Let $\Dc_n$ be the set of $n \times n$ real-valued diagonal matrices.  Let $r(\A)$ refer to the spectral radius of a matrix $\A$.  Let $\A$ be a fixed $n \times n$  non-negative matrix having a positive spectral radius.  Define $R\suchthat \Dc_n \rightarrow \Reals$ by $R(\D) \eqdef \log r(e^\D \A)$.  
Then $R(\D)$ is a convex functional on $\Dc_n$.  Specifically:  for every $\D_1, \D_2 \in \Dc_n$,
\an{\label{eq:4.17}
R((\D_1+\D_2)/2) \leq (R(\D_1) + R(\D_2)) / 2.
}
Moreover, if $\A$ is irreducible and the diagonal entries of $\A$ are positive (or $\A$ is fully indecomposable) then equality holds in \eqref{eq:4.17} if and only if 
\an{\label{eq:D1D2cI}
\D_1 - \D_2 = c \ \I
}
for some $c \in \Reals$, where $\I$ is the identity matrix.
\end{TheoremNum}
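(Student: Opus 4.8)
The ``if'' direction is immediate: if $\D_1-\D_2=c\,\I$ then $e^{\D_1}\A=e^{c}e^{\D_2}\A$, so $R(\D_1)=c+R(\D_2)$ and $R((\D_1+\D_2)/2)=\tfrac c2+R(\D_2)=(R(\D_1)+R(\D_2))/2$. For the convexity \eqref{eq:4.17} I would invoke Kingman's superconvexity theorem \cite{Kingman:1961:Convexity}: each entry $e^{d_i}a_{ij}$ of $e^{\D}\A$ is a log-affine, hence log-convex, function of $\D$, and the spectral radius of a matrix whose entries are log-convex functions is log-convex. (Self-contained alternative: expand $(e^{\D}\A)^{k}$ over length-$k$ walks, so that $\1^{\top}(e^{\D}\A)^{k}\1$ is a sum of log-affine functions of $\D$, hence log-convex by H\"older; divide the logarithm by $k$ and let $k\to\infty$.)

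For ``only if'', assume first that $\A$ is irreducible with positive main diagonal, so that $e^{\D}\A$ is irreducible with a simple positive Perron root for every real diagonal $\D$. Suppose equality holds in \eqref{eq:4.17}; set $\E:=\D_1-\D_2$ and $g(t):=R(\D_2+t\E)$. Then $g$ is convex on $[0,1]$ and at $t=1/2$ equals the value of the chord through $(0,g(0))$ and $(1,g(1))$; a convex function touching that chord at an interior point equals it throughout, so $g$ is affine on $[0,1]$, say $g(t)=g(0)+st$. Writing $\M:=e^{\D_2}\A$ and $\F:=\E-s\,\I$, and using $e^{\D_2+t\E}\A=e^{ts}\,e^{t\F}\M$, we get $\log r(e^{t\F}\M)=g(t)-st=\log r(\M)$ for all $t\in[0,1]$. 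The task is now to deduce $\F=\0$, equivalently $\D_1-\D_2=s\,\I$.

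Let $\vv>0$ and $\w>0$ be the right and left Perron vectors of $\M$, normalized by $\ip{\vv}{\w}=1$, and put $\piv:=(v_iw_i)_i$. Conjugating $\M/r(\M)$ by $\mathrm{diag}(\vv)$ yields a stochastic matrix $\P$ with the same zero pattern as $\A$ (hence irreducible, with a loop at every vertex) and stationary distribution $\piv$; since diagonal matrices commute with $\mathrm{diag}(\vv)$, $\log r(e^{t\F}\M)=\log r(\M)+\Lambda(t)$ with $\Lambda(t):=\log r(e^{t\F}\P)$. Moreover $\Lambda(t)=\lim_{k}\tfrac1k\log\Exp_{\piv}\!\big[e^{tS_k}\big]$ is the cumulant generating function of the additive functional $S_k:=\sum_{l=0}^{k-1}F_{X_l}$ of the stationary $\P$-chain $\{X_l\}$, where $F_i:=\F_{ii}$; so $\Lambda(0)=0$, $\Lambda'(0)=\ip{F}{\piv}$, and $\Lambda''(0)=\sigma^{2}(F):=\lim_k\tfrac1k\Var_{\piv}(S_k)\ge0$. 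Since $\Lambda\equiv0$ on $[0,1]$, both $\ip{F}{\piv}=0$ and $\sigma^{2}(F)=0$. Solving the Poisson equation $(\I-\P)\g=F$ (possible as $F$ is $\piv$-centered), the martingale decomposition of $S_k$ gives $\sigma^{2}(F)=\norm{\g}_{\piv}^{2}-\norm{\P\g}_{\piv}^{2}$, and the equality case of Jensen's inequality $\norm{\P\g}_{\piv}\le\norm{\g}_{\piv}$ forces $\g$ to be constant on the out-neighbourhood $\{j:a_{ij}>0\}$ of each vertex $i$. Because every vertex carries a loop, $\g$ is then constant along every arc of the strongly connected digraph of $\A$, hence globally constant, whence $F=(\I-\P)\g=\0$, i.e. $\F=\0$. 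For fully indecomposable $\A$ only the last step changes: a non-constant $\g$ that is constant on each out-neighbourhood splits $\A$, after reordering, into blocks in which the rows whose out-neighbourhood lies in a level set $V_{k}$ have all their nonzeros in columns $V_{k}$; since a fully indecomposable matrix has a positive diagonal, these blocks must be square, so the rows attached to $V_{1}$ fill only columns of $V_{1}$, exhibiting a $|V_{1}|\times(n-|V_{1}|)$ zero submatrix with $1\le|V_{1}|\le n-1$ --- contradicting full indecomposability.

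The heart of the matter, and the main obstacle, is the third paragraph: identifying $\Lambda''(0)$ with the Markov-chain asymptotic variance $\sigma^{2}(F)$ --- equivalently, carrying out the eigenvalue-perturbation computation of the Hessian of $R$ and recognizing it as a nonnegative quadratic form --- and then characterizing precisely when it vanishes. The positive-diagonal / full-indecomposability hypothesis enters only at the very end, to pass from ``$\g$ constant on each out-neighbourhood'' to ``$\g$ constant'', and this is the step that genuinely fails without it: for $\A=\left[\begin{smallmatrix}0&1\\1&0\end{smallmatrix}\right]$, which is irreducible but neither fully indecomposable nor of positive diagonal, $R(\D)=(d_{1}+d_{2})/2$ is affine, so \eqref{eq:4.17} is always an equality while $\D_1-\D_2$ need not be a scalar matrix.
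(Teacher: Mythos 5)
Your proof is essentially correct, but it follows a genuinely different route from the one this paper takes. (The paper does not actually reprove Theorem \ref{Theorem:Friedland}; it quotes it and then proves a sharpening, Theorem \ref{Theorem:Sharpening}, whose machinery subsumes it.) The paper's route, following Nussbaum, characterizes equality in \eqref{eq:4.17} directly through the equality case of H\"older's inequality applied to the Perron eigenvector equation: equality holds iff $e^{\D}\A=\alpha\,\E^{-1}e^{\C}\A\,\E$ for some positive diagonal $\E$, which reduces to the purely combinatorial constraints \eqref{eq:i=j}--\eqref{eq:i!=j} on the sign pattern of $\A$; the positive-diagonal case is then immediate from \eqref{eq:i=j}, and the general question becomes connectivity of the bipartite graph of $\A$. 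Your route instead restricts to the segment, uses affineness of a convex function that touches its chord at an interior point, and identifies the vanishing second derivative of the log-Perron root with the Markov-chain asymptotic variance $\sigma^{2}(F)=\norm{\g}_{\piv}^{2}-\norm{\P\g}_{\piv}^{2}$, so that the equality case of Jensen forces $\g$ to be constant on each out-neighbourhood. This is heavier machinery (analytic perturbation of a simple Perron root, the Poisson equation, the martingale variance identity --- you rightly flag $\Lambda''(0)=\sigma^{2}(F)$ as the step requiring the real work), but it buys an explicit nonnegative quadratic form whose kernel exhibits the degeneracy. Notably, your condition ``$\g$ constant on every out-neighbourhood $\{j\suchthat A_{ij}>0\}$'' is the exact analytic counterpart of the paper's condition \eqref{eq:i!=j} equating $L_j=L_k$ whenever a row has two nonzero entries, and the step where you pass from local to global constancy is precisely where the paper's sharper hypothesis lives: local constancy propagates globally iff the bipartite graph of $\A$ is connected, i.e.\ $\A\tr\A$ is irreducible (Theorem \ref{Theorem:Main}), of which positive main diagonal and full indecomposability are special cases --- so your argument, with that last step replaced, would reprove the paper's main theorem. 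Two small points to tighten: ``a fully indecomposable matrix has a positive diagonal'' must be read as a positive \emph{permutation} diagonal (total support), not a positive main diagonal --- that is what your block-squareness argument actually uses, and so used it is correct; and your symbol $\E$ for $\D_1-\D_2$ collides with the paper's use of $\E$ for the conjugating diagonal matrix.
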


In a recent paper, Cohen \citet{Cohen:2012:Cauchy} asks whether a weaker condition may be substituted in the theorem for the condition that $\A$ be fully indecomposable.  In particular, Cohen asks whether $\A$ being primitive would suffice. 

Here, these questions are answered:  yes --- the condition that $\A$ is fully indecomposable can be weakened; but no --- the condition that $\A$ be primitive is too weak.  A condition is found in between these two that can be substituted in the theorem --- that $\A\tr\A$ be irreducible --- and it will be shown that this condition is the sharpest possible.  The combination of irreducible $\A$ and $\A\tr\A$ is shown to be equivalent to the condition found for several strict inequalities in \citet{Cohen:Friedland:Kato:and:Kelly:1982}, which is that $\A^2$ and $\A\tr\A$ be irreducible.  

Several ancillary results are also presented.  Specific counterexamples are constructed for full indecomposability and primitivity: 1) partly decomposable matrices that nevertheless require $\D_1 - \D_2 = c \ \I$ for equality in \eqref{eq:4.17}, and 2) primitive matrices that produce equality in \eqref{eq:4.17} even though $\D_1 - \D_2 \neq c \ \I$. 


\section{Main Question}
In Theorem \ref {Theorem:Friedland}, the equality in \eqref {eq:4.17} resulting from $\D_1 - \D_2 = c \ \I$ is readily verified for the `if' direction.  What is of interest is therefore the `only if' direction.  This is formalized as follows:
\begin{Definition}[\PS]
An nonnegative $n \times n$ matrix $\A$ is said to \emph{have \PS} when
\enumlist{
\item\  the equality
\an{\label{eq:CDI}
\log r(e^{(1-t) \C + t \D}\A) = (1-t) \log r(e^{\C}\A) + t \log r(e^{\D} \A),
}
for $\C, \D \in \Dc_n$ and some $t \in (0, 1)$, implies $\C - \D$ is a \emph{scalar matrix}, i.e.
\an{\label{eq:CDcI}
\C - \D = c \ \I
}
for some $c \in \Reals$;

\item\  or equivalently, $\C - \D$ being nonscalar implies for all $t \in (0,1)$ that
\an{\label{eq:CDI2}
\log r(e^{(1-t) \C + t \D}\A) < (1-t) \log r(e^{\C}\A) + t \log r(e^{\D} \A).
}
}
\end{Definition}

Irreducibility is central to \PS, so it is now defined.  First some notation needs to be described:
\desclist{
\item[An $n \times n$ matrix] is represented as \pr{$\matrxinline{A_{ij}}_{i,j=1}^n \equiv \matrxinline{A_{ij}}_{i,j \in \{1, \ldots, n\}} \equiv \A$}.
\item[The $(i,j)$ element] of matrix $\A$ is represented by \pr{$[\A]_{ij} \equiv A_{ij}$}.
\item[$\A> \0$, $\x > \0$] means all elements of matrix $\A$ or vector $\x$ are strictly positive.  
\item[$\D \eqdef \diaginline{D_i}$] means a diagonal matrix with diagonal elements $D_i$.
}

Two equivalent properties are typically used to define irreducibility:

\begin{Definition}[Irreducibility, Definition 1 \pr{\citep[p. 50]{Gantmacher:1959vol2}}]
An $n \times n$ square matrix $\A$ is called \emph{irreducible} if the
index set $\{1, 2, \ldots, n\}$ cannot be partitioned into two nonempty sets $\Sc_1, \Sc_2$ such that
\ab{
A_{ij} = 0 \text{ for all } i \in \Sc_1, j \in \Sc_2.
}
\end{Definition}

\begin{Definition}[Irreducibility, Definition 2 \pr{\citep[p. 50]{Gantmacher:1959vol2}}]
An $n \times n$ square matrix $\A$ is called \emph{irreducible} if there is no permutation matrix $\P$ such that 
\ab{
\A = \P \Bmatr{\A_1 & \0_{12} \\ \A_{21} & \A_{2}} \P \tr,
}
where $\P\tr$ is the transpose of $\P$, $\A_{1}$, $\A_{2}$, and $\A_{21}$ are $p \times p$, $q \times q$, and $q \times p$ matrices respectively, $\0_{12}$ is a $p \times q$ matrix of zeros, and $p+q=n$, $p, q \geq 1$. 
\end{Definition}

\begin{Definition}[Reducibility]
A square matrix is called \emph{reducible} if it is not irreducible.
\end{Definition}

For nonnegative matrices, Seneta \citep [Definition 1.6, p. 18]{Seneta:2006}(also used in \citet[p. 61] {Nussbaum:1986:Convexity}) \emph{defines} irreducibility in the following way, but as Gantmacher \citet {Gantmacher:1959vol2} shows, this is a consequence of the definitions above:

\begin{CorollaryNum}[Irreducibility, Nonnegative Matrices \pr{ \citet[Corollary, p. 52] {Gantmacher:1959vol2}}]\label{Corollary:Irreducible}
An $n \times n$ nonnegative matrix $\A$ is {irreducible} if for every pair
$(i, j)$ of its index set, there exists a positive integer $m \equiv m(i, j)$ such that $[\A^m]_{ij} > 0$.
\end{CorollaryNum}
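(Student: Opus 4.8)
The plan is to prove the contrapositive of the stated implication: if $\A$ is \emph{reducible}, then there is at least one pair $(i,j)$ for which $[\A^m]_{ij}=0$ for \emph{every} positive integer $m$, which is exactly the negation of the displayed hypothesis. By Definition 1, reducibility of $\A$ yields a partition of $\{1,\ldots,n\}$ into nonempty disjoint sets $\Sc_1,\Sc_2$ with $A_{ij}=0$ whenever $i\in\Sc_1$ and $j\in\Sc_2$. Fixing any $i\in\Sc_1$ and $j\in\Sc_2$ (which exist since both sets are nonempty), it remains to show that no power of $\A$ has a positive $(i,j)$ entry.

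The key step is to expand the matrix power as a sum over length-$m$ walks,
\[
[\A^m]_{ij}=\sum_{k_1,\ldots,k_{m-1}=1}^{n} A_{i k_1}\,A_{k_1 k_2}\cdots A_{k_{m-1} j},
\]
with the convention $k_0\eqdef i$, $k_m\eqdef j$, the sum reducing to the single entry $A_{ij}$ when $m=1$. In any term the index sequence $k_0,k_1,\ldots,k_m$ starts in $\Sc_1$ and ends in $\Sc_2$, so there is a least $\ell$ with $k_\ell\in\Sc_2$; then $\ell\ge 1$ because $k_0=i\in\Sc_1$, and $k_{\ell-1}\in\Sc_1$ by minimality, so the factor $A_{k_{\ell-1}k_\ell}$ equals $0$ and the whole term vanishes. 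Hence $[\A^m]_{ij}=0$ for all $m\ge 1$, contradicting the hypothesis; therefore $\A$ is irreducible. Equivalently one may induct on $m$ via $[\A^m]_{ij}=\sum_{k}[\A^{m-1}]_{ik}A_{kj}$: for $i\in\Sc_1$, $j\in\Sc_2$ each summand is $0$ because either $k\in\Sc_1$ and $A_{kj}=0$, or $k\in\Sc_2$ and $[\A^{m-1}]_{ik}=0$ by the inductive hypothesis, the base case $m=1$ being the defining property of $\Sc_1,\Sc_2$.

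There is no substantial obstacle here; the only point requiring a little care is the bookkeeping of the ``first crossing'' of a walk from $\Sc_1$ into $\Sc_2$, together with the degenerate cases $m=1$ and $\ell=1$ (where the crossing is the edge $i\to k_1$). For completeness — so that the displayed condition \emph{characterizes} irreducibility when $n\ge 2$ — I would also record the converse: assume $\A$ irreducible and fix $i$; let $\Sc\eqdef\{\,j:[\A^m]_{ij}>0\text{ for some }m\ge 1\,\}$. When $n\ge 2$, irreducibility forces row $i$ of $\A$ to be nonzero (otherwise $\{i\}$ and its complement witness reducibility), so $\Sc\neq\emptyset$; and $\Sc$ is closed under out-edges, since $[\A^m]_{ia}>0$ and $A_{ab}>0$ give $[\A^{m+1}]_{ib}>0$. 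Thus if $\Sc$ were a proper subset, then $\Sc_1\eqdef\Sc$ and $\Sc_2\eqdef\{1,\ldots,n\}\setminus\Sc$ would be nonempty with $A_{ab}=0$ for all $a\in\Sc_1$, $b\in\Sc_2$, contradicting irreducibility. Hence $\Sc=\{1,\ldots,n\}$, and since $i$ was arbitrary the path condition holds for every pair $(i,j)$.
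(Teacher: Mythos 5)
Your proof is correct. The paper itself gives no proof of this corollary---it is quoted from Gantmacher as a consequence of the two definitions of irreducibility---and your argument (contrapositive via the ``first crossing'' of a walk from $\Sc_1$ into $\Sc_2$, or equivalently the induction on $m$) is exactly the standard derivation Gantmacher's text supplies. The extra converse direction, including the $n\ge 2$ caveat for a zero row, is careful and accurate, though not required by the one-directional statement as quoted.
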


Equivalent to irreducibility is the following key property (usually stated as strong connectivity of the associated directed graph of a matrix, but stated more directly here).

\begin{TheoremNum}[\pr{\citep[Theorem 3.2.1]{Brualdi:and:Ryser:1991}}]\label{Theorem:Path}
A square matrix $\A$ is irreducible if and only if, for each pair of indices $(i, j)$ there is a sequence of nonzero elements from $i$ to $j$,  $(A_{i h_1}, A_{h_1 h_2}, \ldots, A_{h_p, j})$ or $A_{ij} \neq 0$.
\end{TheoremNum}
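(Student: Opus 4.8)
The plan is to establish the two implications of the stated equivalence separately, each by contraposition, working directly from Definition~1 of irreducibility (the partition form). The useful dictionary is: the clause ``$A_{ij}=0$ for all $i\in\Sc_1$, $j\in\Sc_2$'' says precisely that no nonzero entry of $\A$ leads from an index in $\Sc_1$ to an index in $\Sc_2$; equivalently, once a sequence of nonzero entries $(A_{k_0k_1},A_{k_1k_2},\ldots)$ starts at an index $k_0\in\Sc_1$, every index $k_\ell$ it subsequently passes through again lies in $\Sc_1$.

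\textbf{Path condition $\Rightarrow$ irreducible.} I argue the contrapositive. If $\A$ is reducible, fix a partition $\{1,\ldots,n\}=\Sc_1\cup\Sc_2$ with both parts nonempty and $A_{ij}=0$ for all $i\in\Sc_1$, $j\in\Sc_2$, and pick any $i\in\Sc_1$ and $j\in\Sc_2$. Then $A_{ij}=0$ outright, and any hypothetical sequence of nonzero elements $(A_{ih_1},A_{h_1h_2},\ldots,A_{h_pj})$ would, by the dictionary above and a short induction on its length, force $h_1,\ldots,h_p$ and then $j$ to lie in $\Sc_1$, contradicting $j\in\Sc_2$. Hence no such sequence exists, so the path condition fails at the pair $(i,j)$.

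\textbf{Irreducible $\Rightarrow$ path condition.} Again by contraposition: suppose the path condition fails at some pair $(i,j)$, so there is neither a sequence of nonzero elements from $i$ to $j$ nor a nonzero entry $A_{ij}$. Treat first the substantive case $i\neq j$. Let $\Sc_1$ consist of $i$ together with every index reachable from $i$ by a sequence of nonzero elements. Then $\Sc_1$ contains $i$ and so is nonempty; $j\notin\Sc_1$ by the failure of the path condition, so $\Sc_2:=\{1,\ldots,n\}\setminus\Sc_1$ is nonempty and contains $j$; and $\Sc_1$ is closed under following a nonzero entry, since if $a\in\Sc_1$ and $A_{ab}\neq0$ then either $a=i$, so $(A_{ib})$ exhibits $b\in\Sc_1$, or $a$ is reachable from $i$, so appending $A_{ab}$ to a path from $i$ to $a$ exhibits $b\in\Sc_1$. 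This closure is exactly the statement $A_{ab}=0$ for all $a\in\Sc_1$, $b\in\Sc_2$, so $\A$ is reducible. The remaining instance $i=j$ follows once the distinct-index case is in hand: for $n\geq2$, irreducibility supplies a sequence of nonzero elements from $i$ to some $k\neq i$ and one from $k$ back to $i$, and their concatenation is a (closed) sequence from $i$ to $i$; the sole genuine exception, the $1\times1$ zero matrix, is conventionally excluded or declared irreducible.

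I do not expect a genuine obstacle here: this is the standard identification of irreducibility with strong connectivity of the associated digraph, and the only points requiring care are the bookkeeping that keeps both $\Sc_1$ and $\Sc_2$ nonempty in the reachability construction (the heart of the proof) and the handling of the diagonal pair $i=j$ and the degenerate $1\times1$ case. It is also worth recording, since it links this statement to Corollary~\ref{Corollary:Irreducible}, that because $\A$ is nonnegative there is no cancellation in its powers, so ``there is a sequence of nonzero elements from $i$ to $j$ of length $m$'' holds if and only if $[\A^m]_{ij}>0$.
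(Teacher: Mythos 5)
The paper states this result as a citation to Brualdi and Ryser (Theorem 3.2.1) and gives no proof of its own, so there is nothing internal to compare against. Your argument is the standard and correct one: the reducibility partition blocks any nonzero sequence from $\Sc_1$ into $\Sc_2$, and conversely the reachability set of $i$ furnishes the partition when the path condition fails; you also rightly flag the only delicate points, namely the diagonal pair $i=j$ and the $1\times 1$ zero matrix, which the theorem's phrasing glosses over. No gaps.
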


\section{Results}
We wish to know the properties of $\A$ that are necessary and sufficient to yield \PS.  First it is shown that irreducibility of $\A$ is a necessary condition.

\begin{TheoremNum}\label{Theorem:Reducible}
Reducible nonnegative matrices never have \PS.
\end{TheoremNum}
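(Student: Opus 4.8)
The plan is to show that \emph{every} reducible nonnegative $\A$ admits a pair $\C,\D\in\Dc_n$ with $\C-\D$ nonscalar for which \eqref{eq:CDI} holds with equality at every $t\in(0,1)$; by the definition of \PS\ this already shows $\A$ does not have it. (Throughout I take $r(\A)>0$, so that $R(\D)=\log r(e^\D\A)$ is finite; if instead $r(\A)=0$ then $R\equiv-\infty$, both sides of \eqref{eq:CDI} equal $-\infty$ for all $\C,\D$, and the conclusion is immediate.) The mechanism is that reducibility makes $R$ a pointwise maximum over the spectral radii of two independent diagonal blocks, so one can pin $R$ to be constant along a whole segment whose endpoints differ by a nonscalar matrix.

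First pass to a normal form. \PS\ is invariant under permutation similarity: for a permutation matrix $\P$, $r(e^{\D}\A)=r(\P\tr e^{\D}\A\,\P)=r(e^{\P\tr\D\P}(\P\tr\A\,\P))$ with $\P\tr\D\P$ again diagonal, and $\C-\D$ is scalar iff $\P\tr(\C-\D)\P$ is. So, using the block-triangular characterization of reducibility, I may assume $\A=\Bmatr{\A_1 & \0 \\ \A_{21} & \A_2}$ with $\A_1$ of order $p\ge1$ and $\A_2$ of order $q\ge1$. Partitioning a diagonal $\D$ conformally as $\D=\Bmatr{\D_1 & \0 \\ \0 & \D_2}$, the product $e^{\D}\A$ is block lower triangular with diagonal blocks $e^{\D_1}\A_1$ and $e^{\D_2}\A_2$, so its spectrum is the union of theirs and
\[
R(\D)=\log r(e^{\D}\A)=\max\bigl(\log r(e^{\D_1}\A_1),\ \log r(e^{\D_2}\A_2)\bigr).
\]

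Next, build $\C$ and $\D$. Since $r(\A)=\max(r(\A_1),r(\A_2))>0$, fix $k\in\{1,2\}$ with $r(\A_k)>0$ and let $l$ be the other index. On block $k$ let $\C$ and $\D$ both be $s\,\I$, for a real $s$ to be chosen; on block $l$ let $\C_l=\0$ and $\D_l=\I$. Then $\C-\D$ vanishes on block $k$ and equals $-\I$ on block $l$, so, both blocks being nonempty, $\C-\D\neq c\,\I$ for any $c\in\Reals$, i.e.\ $\C-\D$ is nonscalar. For $t\in[0,1]$ the block-$l$ part of $(1-t)\C+t\D$ equals $t\,\I$, so $r(e^{t\I}\A_l)=e^{t}r(\A_l)\le e\,r(\A_l)$, and thus
\[
R\bigl((1-t)\C+t\D\bigr)=\max\bigl(s+\log r(\A_k),\ t+\log r(\A_l)\bigr),
\]
the second entry read as $-\infty$ when $r(\A_l)=0$. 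Choosing $s$ so that $s+\log r(\A_k)\ge 1+\log r(\A_l)$ (no constraint needed when $r(\A_l)=0$) forces $R\bigl((1-t)\C+t\D\bigr)=s+\log r(\A_k)$ for all $t\in[0,1]$, and in particular $R(\C)=R(\D)=s+\log r(\A_k)$. Hence \eqref{eq:CDI} holds with equality for every $t\in(0,1)$ even though $\C-\D$ is nonscalar, so $\A$ lacks \PS.

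I do not anticipate a deep obstacle; the work is bookkeeping. The three points to handle with care are: the permutation-invariance of \PS\ (so the block normal form is available); the fact that one diagonal block may have zero spectral radius, which is exactly why one inflates the block $k$ with $r(\A_k)>0$ and lets the $-\infty$ term drop out of the maximum; and the demand that the inflated block dominate \emph{uniformly} in $t\in[0,1]$ — achieved here by the one-line bound $r(e^{t\I}\A_l)\le e\,r(\A_l)$, but which in a softer construction would require a continuity/compactness argument for the spectral radius over the compact segment of diagonal matrices.
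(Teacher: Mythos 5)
Your proof is correct and follows essentially the same route as the paper's: both exploit the fact that the spectral radius of a reducible matrix is the maximum of the spectral radii of its diagonal blocks, and then choose $\C,\D$ whose nonscalar difference is confined so that the same block stays maximal for all $t$ and \eqref{eq:CDI} degenerates to an identity. The paper works with the full Frobenius normal form and inflates the already-maximal block by $c_h\,\I$, whereas you use a single two-block split and pin the maximum by making one block uniformly dominant while varying the other; these are minor variants of the same idea.
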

\begin{proof}
This is established by constructing $\C$ and $\D$ such that $\C-\D$ is nonscalar but \eqref {eq:CDI} holds.

The spectrum of a reducible matrix is the union of the spectra of the irreducible diagonal block matrices of its Frobenius normal form \citep[p. 29-11]{Hogben:2007}.  Its spectral radius is thus the maximum of the spectral radii of these diagonal block matrices.

The Frobenius normal form of a reducible matrix may be represented as a partition of the indices into disjoint nonempty sets $\Fc_1, \ldots, \Fc_\nu$ where $\nu \geq 2$.  So $\Fc_1 \union \Fc_2 \union \cdots \union \Fc_\nu = \{1, 2, \ldots, n\}$.  The irreducible diagonal block matrices are $\A_1, \ldots, \A_\nu$, each of them being principal submatrices of $\A$, where $\A_k \eqdef \matrxinline{A_{ij}}_ {i,j \in \Fc_k}$.
Thus $r(\A) = \max_{k=1, \ldots, \nu} r(\A_k)$.  

In terms of the Frobenius normal form of $e^\D \A$,
\ab{
r(e^\D \A) = \max_{k=1, \ldots, \nu} r(e^{\D_k} \A_k),
}
where $e^{\D_k} \A_k \eqdef [e^{D_i} A_{ij}]_{i,j \in \Fc_k}$.  Let $h$ be one of the maximal blocks, i.e.\ where $r(e^{\D_h} \A_h)=r(e^\D \A)$.  Now, construct $\C$ from $\D$ thus:
\ab{
\case{
\C_h = \D_h + c_h \I_h\\
\C_k = \D_k	& k \neq h.
}
}
where $c_h > 0$.
Clearly $\C - \D$ is not scalar.  For block $h$, \eqref {eq:1.28} becomes
\an{
\log r(\e^{[(1{-}t) \C_h + t \D_h]} \A_h )
&= \log r(e^{[(1{-}t) (\D_h + c_h \I_h) + t \D_h]} \A_h) \notag\\
&= \log r(e^{(1{-}t)  c_h}  \ e^{\D_h} \A_h) 
= (1{-}t)  c_h +  \log r(e^{\D_h} \A_h) \notag\\
&=(1{-}t) ( c_h + \log r(e^{ \D_h  } \A_h)) + t \log r(e^{ \D_h } \A_h) \notag\\
&= (1{-}t) \log r(e^{ (\D_h + c_h \I_h) } \A_h) + t \log r(e^{ \D_h } \A_h) \notag\\
&= (1{-}t) \log r(e^{ \C_h } \A_h) + t \log r(e^{ \D_h } \A_h).\label{eq:BlockH}
}
Thus equality holds in \eqref {eq:1.28} for block $h$.  Since $c_h >0$, for all $k \neq h$ and  $t \in [0,1]$, 
\ab{
r(\e^{[(1{-}t) \C_h + t \D_h]} \A_h ) &= (1{-}t)  c_h +  \log r(e^{\D_h} \A_h)  \geq r(e^{\D_h}\A_h) \geq  r(e^{\D_k}\A_k), 
}
so block $h$ remains a maximal block for all $t \in [0,1]$, hence
\ab{
r(\e^{[(1{-}t) \C_h + t \D_h]} \A_h ) = r(\e^{[(1{-}t) \C + t \D]} \A ).
}
Thus, the equality \eqref{eq:BlockH} implies the equality \eqref{eq:CDI}.  Since \eqref{eq:CDI} holds even though $\C-\D$ is nonscalar, $\A$ does not have \PS.
\end{proof}

The principal tool to be used next is the set of general necessary and sufficient conditions found by Nussbaum \citet[Theorem 1.1, pp. 63--68]{Nussbaum:1986:Convexity} for strict log-convexity of the spectral radius of irreducible nonnegative matrices over certain forms of variation.   Nussbaum \citeyearpar[Remark 1.2, pp. 69--70]{Nussbaum:1986:Convexity} 
applies these methods to the particular case of Theorem 4.2 of \citet{Friedland:1981}. 

So as to be self-contained, relevant excerpts are presented here of Nussbaum's Theorem 1.1 \citep{Nussbaum:1986:Convexity}, which subsumes the theorems in \citet{Kingman:1961:Convexity}, \citet{Cohen:1981:Convexity}, and \citet[Theorems 4.1, 4.2]{Friedland:1981}.  The excerpts also include the relevant parts of Nussbaum's proof. 


\begin{TheoremNum}[Nussbaum \citep{Nussbaum:1986:Convexity}, Theorem 1.1 Excerpt]\label{Theorem:NussbaumExcerpt} 
\ 

Let $\A$ and $\B$ be nonnegative irreducible $n \times n$ matrices.  Let $\av$ and $\bv$ be the Perron vectors of $\A$ and $\B$, so $\A \av = r(\A) \av$ and $\B \bv = r(\B) \bv$.  Let $\D_\av$ and $\D_\bv$ refer to the diagonal matrices whose diagonal elements are from the vectors $\av$ and $\bv$, respectively.

Define the following `log-convex combinations': the $n \times n$ matrix
\an{\label{eq:AB}
 \AB \eqdef [A_{ij}^{1-t} B_{ij}^t]_{i,j=1}^n.
}
and the $n$-vector
\an{\label{eq:ab}
\abv \eqdef [a_{ij}^{1-t} b_{ij}^t]_{i=1}^n.
}

Then for all $t \in [0,1]$,
\ab{
r\bigl(\AB \bigr) \leq r(\A)^{1-t} \ r(\B)^t,
}
with equality for some $t \in (0,1)$ if and only if
\an{\label{eq:EAE}
\B =  \frac{r(\B)}{r(\A)} \E^{-1} \A \E,
}
where $\E \eqdef \D_\av \D_\bv^{-1}$, and in this case equality holds for all $t \in [0,1]$.
\end{TheoremNum}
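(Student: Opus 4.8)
The plan is to get the inequality from one rowwise application of H\"older's inequality applied to the Perron vectors, and then to extract the equality condition by combining the equality case of H\"older with the Perron--Frobenius structure of $\AB$. Throughout, the irreducibility of $\A$ and $\B$ is used through the strict positivity of their Perron vectors $\av,\bv$ and the absence of zero rows.

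\textbf{Inequality and the `if' direction.} The endpoints $t\in\{0,1\}$ are immediate, so fix $t\in(0,1)$ and put $\xv\eqdef\abv$, which is strictly positive since $\av,\bv>\0$. The $i$-th coordinate of $(\AB)\xv$ equals $\sum_j (A_{ij}a_j)^{1-t}(B_{ij}b_j)^{t}$, and H\"older with conjugate exponents $1/(1-t)$ and $1/t$ bounds it by $\bigl(\sum_j A_{ij}a_j\bigr)^{1-t}\bigl(\sum_j B_{ij}b_j\bigr)^{t}=\bigl(r(\A)a_i\bigr)^{1-t}\bigl(r(\B)b_i\bigr)^{t}=\lambda\,x_i$, where $\lambda\eqdef r(\A)^{1-t}r(\B)^{t}$. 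Hence $(\AB)\xv\leq\lambda\xv$ coordinatewise; since $\AB\geq\0$ this iterates to $(\AB)^{k}\xv\leq\lambda^{k}\xv$, so each entry of $(\AB)^{k}$ is at most $\lambda^{k}(\max_i x_i)/(\min_j x_j)$ and $r(\AB)=\lim_k\|(\AB)^{k}\|^{1/k}\leq\lambda$. For the `if' direction, if $\B=\frac{r(\B)}{r(\A)}\E^{-1}\A\E$ with $\E=\D_\av\D_\bv^{-1}$ then $(\AB)_{ij}=\bigl(r(\B)/r(\A)\bigr)^{t}(b_i/a_i)^{t}A_{ij}(a_j/b_j)^{t}$, i.e.\ $\AB=\bigl(r(\B)/r(\A)\bigr)^{t}\F^{-1}\A\F$ with $\F=\diaginline{(a_i/b_i)^{t}}$; this is a positive diagonal similarity of a scalar multiple of $\A$, so $r(\AB)=\bigl(r(\B)/r(\A)\bigr)^{t}r(\A)=\lambda$ for every $t\in[0,1]$, which also gives the final clause.

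\textbf{`Only if' direction.} Suppose $r(\AB)=\lambda$ for some $t_0\in(0,1)$. The first step is to show $\AB$ must be irreducible with Perron vector $\xv$. Choose an irreducible diagonal block $\Cc$ of the Frobenius normal form of $\AB$, on an index set $\Fc$, with $r(\Cc)=r(\AB)=\lambda$ (such a block exists since the spectral radius of a reducible matrix is the maximum over its diagonal blocks). Restricting $(\AB)\xv\leq\lambda\xv$ to the rows in $\Fc$ and discarding the nonnegative entries in columns outside $\Fc$ gives $\Cc\,(\xv|_{\Fc})\leq\lambda\,(\xv|_{\Fc})$; pairing this with the positive left Perron vector of $\Cc$ (which exists because $\Cc$ is irreducible with $r(\Cc)=\lambda$) upgrades it to $\Cc\,(\xv|_{\Fc})=\lambda\,(\xv|_{\Fc})$. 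Feeding this equality back into the row-$i$ H\"older estimate restricted to $\Fc$ forces $\sum_{j\in\Fc}A_{ij}a_j=r(\A)a_i$ for every $i\in\Fc$; but if $\Fc\neq\{1,\dots,n\}$ this contradicts the irreducibility of $\A$ (Definition~1 applied to $\Sc_1=\Fc$, $\Sc_2=\Fc^{c}$), which supplies some $A_{i_0 j_0}>0$ with $i_0\in\Fc$, $j_0\notin\Fc$. Hence $\Fc=\{1,\dots,n\}$, $\AB$ is irreducible, and $(\AB)\xv=\lambda\xv$. Now each rowwise H\"older inequality is an equality, so the sequences $(A_{ij}a_j)_j$ and $(B_{ij}b_j)_j$ are proportional; both are nonzero, so there is $\mu_i>0$ with $B_{ij}b_j=\mu_i A_{ij}a_j$ for all $i,j$, i.e.\ $\B=\D_\muv\A\,\D_\av\D_\bv^{-1}$. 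Applying this to $\bv$ gives $r(\B)\bv=\B\bv=\D_\muv\A\av=r(\A)\D_\muv\av$, so $\mu_i=\frac{r(\B)}{r(\A)}b_i/a_i$ and $\B=\frac{r(\B)}{r(\A)}\E^{-1}\A\E$; equality for all $t$ then follows from the `if' direction.

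\textbf{Expected obstacle.} The delicate point is the irreducibility step in the `only if' direction: for a \emph{reducible} nonnegative $\M$, the conditions $\M\xv\leq\lambda\xv$, $\xv>\0$, $r(\M)=\lambda$ do \emph{not} force $\M\xv=\lambda\xv$ (take $\M=\diaginline{\lambda,0}$), so one cannot simply invoke a Collatz--Wielandt argument on $\AB$ itself. The Frobenius-normal-form reduction above, combined with the fact that an irreducible $\A$ admits no proper ``absorbing'' index set, is exactly what circumvents this; as a byproduct it shows that irreducibility of $\AB$ is itself necessary for equality, a fact the remainder of the paper relies on.
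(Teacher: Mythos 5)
Your proof is correct, and its core --- the rowwise H\"older inequality applied to $\abv$, the proportionality characterization of equality, and the determination of the constants $\mu_i$ --- is the same as the paper's (which follows Nussbaum). The genuine difference lies in how the subinvariance relation $(\AB)\,(\abv) \leq r(\A)^{1-t}r(\B)^{t}\,(\abv)$ is upgraded, in the equality case, to an exact eigenvector equation. The paper cites Seneta's Subinvariance Theorem with $\H=\AB$, but that theorem requires $\H$ to be irreducible, and irreducibility of $\A$ and $\B$ does not by itself give irreducibility of $\AB$: its positive entries are those where $\A$ and $\B$ are simultaneously positive, so for instance $\B=\A\tr$ with $\A$ a cyclic permutation makes $\AB$ the zero matrix. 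Your Frobenius-normal-form argument --- pairing the maximal block with its left Perron vector to force equality on the block, then using H\"older equality together with irreducibility of $\A$ to show the block must be all of $\{1,\dots,n\}$ --- supplies exactly the missing verification that equality forces $\AB$ to be irreducible, and your example $\M=\diaginline{\lambda,0}$ correctly shows why this step cannot be skipped for reducible matrices. The remaining differences are cosmetic: you get $r(\AB)\leq\lambda$ by iterating the coordinatewise bound and applying Gelfand's formula rather than quoting subinvariance, and you solve for $\mu_i$ by applying $\B$ to $\bv$ rather than summing \eqref{eq:Requirement} over $j$, which is the same computation. In short, your route buys a self-contained and gap-free equality analysis at the cost of some length, while the paper's route buys brevity by delegating the eigenvector upgrade to a quoted theorem whose irreducibility hypothesis is left unverified in the excerpt.
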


\begin{proof}[Nussbaum's proof]
The product of the log-convex combinations $\AB$ and $\abv$ manifests H\"{o}lder's inequality.  For each $i = 1, \ldots, n$:
\an{
\sum_{j=1}^n (A_{ij} a_j)^{1-t} (B_{ij} b_j)^t 
& \leq 
\biggl(\sum_{j=1}^n A_{ij} a_j \biggr)^{1-t} 
\biggl(\sum_{j=1}^n B_{ij} b_j \biggr)^{t} \label{eq:HolderSum}\\
&= ( r(\A) \, a_i)^{1-t} \,  (r(\B) \, b_i)^{t}, \notag
}
or, in vector form,
\an{\label{eq:ABab}
(\AB) (\abv) 
&\leq  r(\A) ^{1-t}r(\B)^{t} \ (\abv),
}
with equality for some $t \in (0,1)$ if and only if, for each $i$, the terms in each sum on the right of \eqref {eq:HolderSum} are proportional, i.e. there exists $\gamma_i$ such that
\an{\label{eq:Requirement}
B_{ij} b_j = \gamma_i A_{ij} a_j, \qquad j = 1, \ldots, n.
}
Summation over $j$ in \eqref {eq:Requirement} gives
\ab{
\sum_{j=1}^n B_{ij} b_j = r(\B) \, b_i
= \gamma_i \sum_{j=1}^n A_{ij} a_j = \gamma_i \, r(\A) \, a_i,
}
hence $\gamma_i$ is solved:
\ab{
\gamma_i = \frac{r(\B)}{r(\A)} \frac{b_i}{a_i}.
}
With this, the equality conditions \eqref {eq:Requirement} can be rewritten as
\an{\label{eq:Bequality}
B_{ij} =  \frac{r(\B)}{r(\A)} \frac{b_i}{a_i} A_{ij} \frac{a_j}{b_j}, \qquad i, j = 1, \ldots, n,
}
which is the derivation for \eqref{eq:EAE}.  

The desired term $r(\AB)$ emerges from application of the Subinvariance theorem to \eqref {eq:ABab}.

\begin{TheoremNum}[Subinvariance \pr{\citep [Theorem 1.6, p. 23] {Seneta:2006}}]
For any irreducible nonnegative matrix $\H$ and nonnegative vector $\y$, if $\H \y \leq s \y$, then $\y > \0$ and $r(\H) \leq s$, with equality if and only if $\H \y = r(\H) \y$.  
\end{TheoremNum}
Here,  
$\H = \AB$, $\y = \abv$, and $s =  r(\A)^{1-t} \, r(\B)^{t}$.  Therefore
\ab{
r(\AB) \leq  r(\A)^{1-t} \, r(\B)^{t},
}
with equality for some $t \in (0,1)$ if and only if 
\ab{
(\AB)\ (\abv) =  r(\A)^{1-t} \, r(\B)^{t}\  (\abv)
}
\hide{
\sum_{j=1}^n (A_{ij}^{1-t}B_{ij}^t) \  ( a_j^{1-t}  b_j^t) 
= r(\A)^{1-t} \, r(\B)^{t} \,  ( a_i^{1-t} \,   b_i^{t}),
}
which is precisely equality in \eqref {eq:HolderSum}, whose conditions are given by \eqref {eq:Bequality}, in which case equality holds for all $t \in [0,1]$. 
\end{proof}

If we let $\A$ and $\B$ in Theorem \ref {Theorem:NussbaumExcerpt} be substituted by matrices $e^\C \A$ and $e^\D \A$ we obtain:
\begin{CorollaryNum}[Nussbaum's Remark 1.2 \citet{Nussbaum:1986:Convexity}]
\ 

Let $\A$ be an $\nbn$ irreducible nonnegative matrix, and $\C, \D \in \Dc_n$ be diagonal matrices.  Then for all $t \in [0, 1]$, 
\an{\label{eq:1.28}
\log r(e^{[(1{-}t) \C + t \D]} \A) &\leq (1{-}t) \log r(e^{ \C } \A) + t \log r(e^{ \D } \A),
}
with equality for some $t \in (0, 1)$ if and only if there exists a positive diagonal matrix $\E \in \Dc_n$, and $\alpha > 0$, such that
\an{\label{eq:1.29}
e^\D \A 
= \alpha \, \E^{-1} e^\C \A \E,
}
or in terms of matrix elements,
\an{\label{eq:EqualityConditions}
e^{D_{i}}A_{ij} =  \alpha E_i^{-1} e^{C_{i}} A_{ij} E_j, \qquad  i,j = 1, \ldots, n .
}
\end{CorollaryNum}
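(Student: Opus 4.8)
The plan is to apply Theorem~\ref{Theorem:NussbaumExcerpt} with $\A$ replaced by $e^\C\A$ and $\B$ replaced by $e^\D\A$, and then translate its conclusion into the stated form. First I would check the hypotheses of that theorem: both $e^\C\A$ and $e^\D\A$ are nonnegative, and because $e^\C$ and $e^\D$ are \emph{positive} diagonal matrices, the zero pattern of $e^\C\A$ (and of $e^\D\A$) is identical to that of $\A$; hence, by Theorem~\ref{Theorem:Path}, irreducibility of $\A$ is inherited by $e^\C\A$ and $e^\D\A$, and in particular both have positive spectral radius.

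Next I would compute the log-convex combination \eqref{eq:AB} built from $e^\C\A$ and $e^\D\A$. Entrywise,
\ab{
\bigl[(e^\C\A)^{(1-t)}\circ(e^\D\A)^{(t)}\bigr]_{ij}
= (e^{C_i}A_{ij})^{1-t}\,(e^{D_i}A_{ij})^{t}
= e^{(1-t)C_i+tD_i}\,A_{ij},
}
using the identity $A_{ij}^{1-t}A_{ij}^{t}=A_{ij}$, which is valid for every $A_{ij}\ge 0$ (including $A_{ij}=0$, where both sides vanish). Thus this combination is exactly $e^{[(1-t)\C+t\D]}\A$. Substituting $e^\C\A$ and $e^\D\A$ for $\A$ and $\B$ in the inequality of Theorem~\ref{Theorem:NussbaumExcerpt} and inserting this identity gives $r\bigl(e^{[(1-t)\C+t\D]}\A\bigr)\le r(e^\C\A)^{1-t}\,r(e^\D\A)^{t}$; taking logarithms yields \eqref{eq:1.28}.

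For the equality clause, Theorem~\ref{Theorem:NussbaumExcerpt} says equality holds for some $t\in(0,1)$ if and only if $e^\D\A=\bigl(r(e^\D\A)/r(e^\C\A)\bigr)\,\E^{-1}e^\C\A\,\E$ with $\E\eqdef\D_\av\D_\bv^{-1}$, where $\av$ and $\bv$ now denote the Perron vectors of $e^\C\A$ and $e^\D\A$, and that equality then holds for all $t\in[0,1]$. Setting $\alpha\eqdef r(e^\D\A)/r(e^\C\A)>0$, this condition is precisely \eqref{eq:1.29}, and writing it out gives \eqref{eq:EqualityConditions}; this yields the ``only if'' half at once. For the ``if'' half, suppose some positive diagonal $\E\in\Dc_n$ and $\alpha>0$ satisfy \eqref{eq:1.29}. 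Then $e^\D\A$ is similar to $\alpha\,e^\C\A$, so $r(e^\D\A)=\alpha\,r(e^\C\A)$ (which pins down $\alpha$), and applying $\E^{-1}$ to the Perron eigenequation of $e^\C\A$ shows $\E^{-1}\av$ is a positive eigenvector of $e^\D\A$ with eigenvalue $r(e^\D\A)$, hence a scalar multiple of $\bv$; therefore $\D_\av\D_\bv^{-1}$ is a positive scalar multiple of $\E$, and that scalar cancels in $\E^{-1}(\cdot)\E$. So the hypothesis of Theorem~\ref{Theorem:NussbaumExcerpt} is met and equality holds for all $t\in[0,1]$.

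This is essentially a bookkeeping corollary, so I do not expect a serious obstacle; the only two points needing care are the degenerate case $A_{ij}=0$ in the Hadamard-power identity, and the gap between the purely existential ``there exist $\E,\alpha$'' in \eqref{eq:1.29} and the rigid prescription of $\E$ and $\alpha$ in Theorem~\ref{Theorem:NussbaumExcerpt}, which is closed by uniqueness of the Perron vector of an irreducible nonnegative matrix.
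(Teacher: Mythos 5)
Your proposal is correct and follows exactly the route the paper takes: the paper states this corollary as an immediate substitution of $e^\C\A$ and $e^\D\A$ for $\A$ and $\B$ in Theorem~\ref{Theorem:NussbaumExcerpt}, with no further argument given. You merely fill in the routine details the paper leaves implicit (preservation of irreducibility, the Hadamard-power identity, and reconciling the existential $\E,\alpha$ with the prescribed ones), all of which check out.
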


With this machinery in place, we are ready to analyze \PS.
Define $L_i \eqdef \log E_i$ and $\Delt \eqdef \D - \C$, i.e.\ $\Delta_i \eqdef D_i - C_i$.  Then \eqref{eq:EqualityConditions} is equivalent to the condition that for each $i,j \in 1, \ldots, n$, 
\an{
A_{ii} &= 0, \text{ or } 
\Delta_i =\log \alpha & j=i \label{eq:i=j};\\
A_{ij} &= 0, \text{ or } 
 \Delta_i = \log \alpha + L_j -L_i & j\neq i. \label{eq:i!=j}
}
For $\A$ to have \PS, satisfaction of the set of equalities \eqref{eq:i=j} and \eqref{eq:i!=j} must imply that $\Delt = c \ \I$.  

What are necessary and sufficient conditions on $\A$ for \eqref{eq:i=j} and \eqref{eq:i!=j} to imply $\Delt=c \ \I$?  We proceed in stages.

\begin{LemmaNum}
\PS\ depends solely upon the sign pattern of $\A$.
\end{LemmaNum}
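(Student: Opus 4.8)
The plan is to read the result off the reduction already carried out above. Recall that, combining Nussbaum's Corollary (Remark 1.2) with the substitutions $L_i=\log E_i$ and $\Delt=\D-\C$, we showed that \emph{for irreducible $\A$} the matrix $\A$ has \PS\ precisely when every triple $(\Delt,\Lm,\alpha)\in\Dc_n\times\Dc_n\times(0,\infty)$ satisfying the system \eqref{eq:i=j}--\eqref{eq:i!=j} has $\Delt=c\,\I$ for some $c\in\Reals$. The key observation is that this system refers to the entries of $\A$ only through the clauses ``$A_{ii}=0$'' and ``$A_{ij}=0$'': the $i$th equation in \eqref{eq:i=j} is imposed exactly when $A_{ii}\neq0$, and the $(i,j)$th equation in \eqref{eq:i!=j} exactly when $A_{ij}\neq0$. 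Hence the solution set of \eqref{eq:i=j}--\eqref{eq:i!=j}, and therefore the truth of the implication ``(solution)$\Rightarrow\Delt$ scalar,'' depends on $\A$ only through its support $\{(i,j):A_{ij}\neq0\}$, i.e.\ through its sign pattern.

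So let $\A$ and $\A'$ be nonnegative $n\times n$ matrices with the same sign pattern; we may assume $n\geq2$, since for $n=1$ every matrix trivially has \PS. If $\A$ is reducible, then so is $\A'$ (reducibility is determined by the support, via Theorem \ref{Theorem:Path}), and by Theorem \ref{Theorem:Reducible} neither has \PS. If instead $\A$ is irreducible, then so is $\A'$; by Corollary \ref{Corollary:Irreducible} some power $\A^m$ has a positive diagonal entry, whence $r(\A)>0$, and since $e^\D$ is a positive diagonal matrix $e^\D\A$ has the same support as $\A$, so $r(e^\D\A)>0$ for every $\D\in\Dc_n$ and $\log r(e^\D\A)$ is everywhere finite; the same holds for $\A'$. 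Now invoke the reduction: $\A$ has \PS\ iff the combinatorial system \eqref{eq:i=j}--\eqref{eq:i!=j} --- which is literally identical for $\A'$ --- forces $\Delt=c\,\I$, so $\A$ has \PS\ iff $\A'$ does.

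There is no substantive obstacle; the only points needing care are that the Nussbaum-based characterisation presupposes irreducibility, so the reducible case must be routed through Theorem \ref{Theorem:Reducible}, and that the well-definedness of the logarithm (i.e.\ positivity of the spectral radius) is itself a sign-pattern property.
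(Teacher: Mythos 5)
Your proposal is correct and rests on the same key observation as the paper's proof: positive entries $A_{ij}$ cancel from both sides of \eqref{eq:EqualityConditions}, so the equality system \eqref{eq:i=j}--\eqref{eq:i!=j} depends on $\A$ only through its support. The extra bookkeeping you supply (routing the reducible case through Theorem \ref{Theorem:Reducible} and checking that positivity of the spectral radius is itself a sign-pattern property) is sensible added rigor but not a different argument.
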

\begin{proof}
Whenever $A_{ij} >0$, $A_{ij}$ cancels out from both sides of \eqref{eq:EqualityConditions}, so only the sign of $A_{ij}$ (by hypothesis constrained to $0$ or $+$) enters into \eqref{eq:i=j} and \eqref{eq:i!=j}.
\end{proof}

\begin{LemmaNum}\label{Lemma:LiLj}
For irreducible $\A$, if the equality conditions \eqref{eq:i=j} and \eqref{eq:i!=j} are met, and some $L_i \neq L_j$, then $\Delt \neq c \ \I$ for any $c \in \Reals$.
\end{LemmaNum}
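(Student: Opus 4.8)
The plan is to argue by contradiction: assume the equality conditions \eqref{eq:i=j}--\eqref{eq:i!=j} hold while simultaneously $\Delt = c\,\I$ for some $c \in \Reals$, and show this forces all the $L_i$ to coincide, contradicting the hypothesis that $L_i \neq L_j$ for some pair. Since that hypothesis already supplies two distinct indices, we may assume $n \geq 2$ throughout.

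First I would feed $\Delta_i = c$ into \eqref{eq:i!=j}: for every off-diagonal pair $(i,j)$ with $A_{ij} \neq 0$ we get $c = \log\alpha + L_j - L_i$, i.e.\ $L_i - L_j = \log\alpha - c =: k$. The key observation is that $k$ is the \emph{same} constant for every such arc --- traversing any nonzero off-diagonal entry of $\A$ decreases $L$ by exactly $k$. (If moreover $A_{ii}\neq 0$ for some $i$, then \eqref{eq:i=j} already gives $c = \log\alpha$, hence $k=0$; but this shortcut will not be needed.)

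Next I would use irreducibility to show $k = 0$. By Theorem~\ref{Theorem:Path} there is a chain of nonzero entries running from index $1$ to index $2$ and another from $2$ back to $1$; concatenating them yields a closed walk $i_0 \to i_1 \to \cdots \to i_\ell = i_0$ with $\ell \geq 2$, each step an off-diagonal nonzero entry. Telescoping the per-arc identity $L_{i_{m-1}} - L_{i_m} = k$ around this walk gives $0 = L_{i_0} - L_{i_\ell} = \ell\,k$, so $k = 0$. With $k = 0$ the arc identity becomes $L_i = L_j$ whenever $A_{ij}\neq 0$, and one more application of Theorem~\ref{Theorem:Path} --- chaining equalities along a sequence of nonzero entries joining any prescribed pair of indices --- forces $L_i$ to be constant in $i$, contradicting the assumed existence of $L_i \neq L_j$.

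This is essentially a discrete potential/gradient argument on the directed graph of $\A$, and I do not expect a genuine obstacle. The only points needing a little care are the existence of the closed walk and the sign bookkeeping in the telescoping sum, both of which are handled immediately by the path characterization of irreducibility in Theorem~\ref{Theorem:Path}.
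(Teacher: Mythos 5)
Your proof is correct and takes essentially the same route as the paper: the paper telescopes the per-arc identity along a path from $i$ to $j$ and the reverse path from $j$ back to $i$ (which together form exactly your closed walk) to deduce $c=\log\alpha$, i.e.\ your $k=0$, and then chains the resulting equalities $L_i=L_j$ along paths supplied by Theorem~\ref{Theorem:Path}. No gap.
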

\begin{proof}
Suppose to the contrary $\Delt= c \ \I$.  This will be shown to imply that $L_i=L_j$ for all $i,j$.  

Irreducibility of $\A$ means by Theorem \ref{Theorem:Path} that for any pair $i,j \in \{1, \ldots, n\}$ either $A_{ij}>0$, or there is a path of positive elements $(A_{i  \, h_1}, A_{h_1 h_2}, \ldots, A_{h_p j})$, or both.  When $A_{ij} >0$ then \eqref{eq:i!=j} yields 
$\Delta_i = \log \alpha + L_{j} - L_i$,
and when $A_{ij}=0$,  
repeated application of \eqref{eq:i!=j} to the path $(A_{i  \, h_1}, A_{h_1 h_2}, \ldots, A_{h_p j})$ gives:
\an{
\Delta_{i\ \,} &= \log \alpha + L_{h_1} - L_i, \notag \\
\Delta_{h_1} &= \log \alpha + L_{h_2} - L_{h_1}, \notag\\
&\ldots \label{eq:Repeated}\\
\Delta_{h_p} &= \log \alpha + L_{j} - L_{h_p}. \notag
}
Summing them and applying the the hypothesis $\Delta_i = c$ for all $i$ yields
\an{\label{eq:PathID}
\sum_{k \in \{i, h_1, \ldots, h_p\} } \Delta_k &= (p+1) c = (p+1) \log \alpha + L_{j} - L_{i}.
}
The case where $A_{ij} > 0$ can be accommodated in \eqref {eq:PathID} by letting $p=0$.

Irreducibility also implies there must be a reverse path of positive elements  $(A_{j  \, h'_1}, A_{h'_1 h'_2}, \ldots, A_{h'_p i})$ from $j$ to $i$, yielding
\an{\label{eq:PathIDji}
\sum_{k \in \{j, h'_1, \ldots, h'_{p'}\} } \Delta_k &= (p'+1)c = (p'+1) \log \alpha + L_{j} - L_{i}.
}
Summing \eqref {eq:PathID} and \eqref {eq:PathIDji} yields
\an{\label{eq:cSolution}
(p+p'+2) c &=  (p+p'+2) \log \alpha \iff c = \log \alpha.
}
Substitution of $\Delta_i = c = \log \alpha$ in \eqref{eq:Repeated} gives
\ab{
L_i = L_{h_1} = L_{h_2} = \cdots = L_{h_p} = L_j.
}
so $L_i=L_j$.  Since this must hold for every choice of $i,j \in \{1, \ldots, n\}$, this means that $L_i = L_j$ for all $i, j = 1, \ldots, n$.  By contrapositive inference, if some $L_i \neq L_j$, then $\Delt \neq c \ \I$ for any $c \in \Reals$.	
\end{proof}
\ \\

\subsection{Main Results}

\begin{TheoremNum}[Necessary and Sufficient Condition for \PS]\label{Theorem:Main}
For a nonnegative matrix $\A$ to have \PS\ it is necessary and sufficient that $\A$ and $\A\tr\A$ be irreducible.
\end{TheoremNum}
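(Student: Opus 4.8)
The plan is to route everything through the Corollary to Theorem~\ref{Theorem:NussbaumExcerpt} (Nussbaum's Remark~1.2): for irreducible $\A$, equality in \eqref{eq:1.28} for some $t\in(0,1)$ holds exactly when there are reals $L_i$ and a scalar $\alpha>0$ solving \eqref{eq:i=j}--\eqref{eq:i!=j}, and \PS\ is the assertion that every such solution forces $\Delt$ to be scalar. The bridge to the bipartite graph is to unify \eqref{eq:i=j} and \eqref{eq:i!=j} into the single implication: if $A_{ij}>0$ then $L_j = \Delta_i + L_i - \log\alpha$ (the diagonal case $j=i$ being exactly $\Delta_i=\log\alpha$). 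Read along a fixed row $i$, this forces the $L_j$ to agree over the support $\{j:A_{ij}>0\}$ of that row; and since $[\A\tr\A]_{jk}=\sum_i A_{ij}A_{ik}$ is positive exactly when some row support contains both $j$ and $k$, it follows that $\L$ is constant along every edge of the graph of $\A\tr\A$, hence constant on each of its connected components. Conversely, given any $\L$ that is constant on each row support, putting $\alpha=1$ and letting $\Delta_i$ be the common value of $\L$ on row $i$'s support minus $L_i$ produces a genuine solution of \eqref{eq:i=j}--\eqref{eq:i!=j}.

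\textbf{Sufficiency.} Suppose $\A$ and $\A\tr\A$ are irreducible and take any solution of \eqref{eq:i=j}--\eqref{eq:i!=j}. By the bridge, $\L$ is constant on each connected component of the graph of $\A\tr\A$; irreducibility of $\A\tr\A$ (Theorem~\ref{Theorem:Path}) leaves only one component, so $L_i\equiv \ell$ is globally constant. Irreducibility of $\A$ forces every row to contain a positive entry, so for each $i$ we pick $j$ with $A_{ij}>0$, and the implication above reads $\ell = \Delta_i + \ell - \log\alpha$, i.e.\ $\Delta_i = \log\alpha$. Hence $\Delt = (\log\alpha)\,\I$ is scalar, which is precisely \PS.

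\textbf{Necessity.} Theorem~\ref{Theorem:Reducible} already shows $\A$ itself must be irreducible, so assume $\A$ is irreducible but, aiming at a contradiction, $\A\tr\A$ is reducible. Then the graph of $\A\tr\A$ has a connected component $\Sc_1$ that is a proper, nonempty subset of $\{1,\dots,n\}$; put $L_i=0$ for $i\in\Sc_1$ and $L_i=1$ otherwise, so $\L$ is constant on the components of $\A\tr\A$, hence on every row support, hence extends (with $\alpha=1$) to a solution of \eqref{eq:i=j}--\eqref{eq:i!=j} as in the bridge. Because this $\L$ is nonconstant and $\A$ is irreducible, Lemma~\ref{Lemma:LiLj} gives $\Delt\ne c\,\I$ for every $c\in\Reals$. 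Taking $\C=\0$ and $\D=\Delt$, the Corollary to Theorem~\ref{Theorem:NussbaumExcerpt} then yields equality in \eqref{eq:1.28} for all $t$ with $\C-\D=-\Delt$ nonscalar, so $\A$ lacks \PS\ --- a contradiction.

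\textbf{Where the difficulty lies.} I expect the necessity half to be the crux, and within it the two points that a reducible $\A\tr\A$ really manufactures a counterexample: that the two-valued $\L$ genuinely satisfies \eqref{eq:i=j}--\eqref{eq:i!=j} (the routine extension step), and that the resulting $\Delt$ is nonscalar, which is exactly where irreducibility of $\A$ re-enters via Lemma~\ref{Lemma:LiLj}. It is worth stressing that the two hypotheses are irredundant: $\A\tr\A$ irreducible does not imply $\A$ irreducible --- for instance $\A=\Bmatr{1 & 0 \\ 1 & 1}$ is reducible while $\A\tr\A=\Bmatr{2 & 1 \\ 1 & 1}$ is irreducible --- so Theorem~\ref{Theorem:Reducible} cannot be absorbed into the $\A\tr\A$ clause and both conditions are genuinely needed.
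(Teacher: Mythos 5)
Your proof is correct and takes essentially the same route as the paper's: Nussbaum's equality conditions \eqref{eq:i=j}--\eqref{eq:i!=j} read combinatorially, with connectivity of the row supports (equivalently, irreducibility of $\A\tr\A$) forcing all $L_i$ to coincide for sufficiency, and a nonconstant $\L$ constant on a component of the graph of $\A\tr\A$ combined with Lemma~\ref{Lemma:LiLj} for necessity. Your necessity half is in fact slightly more explicit than the paper's, which merely asserts that $L_j \neq L_k$ may be chosen while still meeting the equality conditions, whereas you exhibit the two-valued solution and verify it.
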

\begin{proof}
Since reducible $\A$ do not have \PS\ by Theorem \ref{Theorem:Reducible}, we assume that $\A$ is irreducible, in which case Nussbaum's \citet[Corollary 1.2 ]{Nussbaum:1986:Convexity} applies, and will be used combinatorially.  We notice that if two non-diagonal elements in a row of $\A$ are positive, e.g. $A_{ij} > 0, A_{ik}>0$, then \eqref{eq:i!=j} gives
\ab{
\Delta_i &= \log \alpha + L_j - L_i  = \log \alpha + L_k - L_i\\ 
\thus \qquad  L_k &= L_j.
}
Thus, equality relations between the $L_i$ variables are the result of a single row having multiple positive elements $A_{ij}>0$.  The identity of the row is irrelevant to the $L_i$ values that are equated.  

This naturally brings us to the bipartite graph associated with $\A$.  Let us define both the directed graph and the simple bipartite graph associated with a matrix.

\begin{Definition}[Associated Directed Graph]
The \emph{directed graph} (also called \emph{digraph}) associated with an $n \times n$ matrix $\A$ consists of a set of $n$ vertices, and a set of directed edges (also called \emph{arcs}), where an edge goes from vertex $j$ to vertex $i$ when $A_{ij} \neq 0$.
\end{Definition}

\begin{Definition}[Associated Bipartite Graph]
The \emph{simple bipartite graph} associated with an $n \times m$ matrix $\A$ consists of a set $\Xc$ of $n$ vertices corresponding the row indices of the matrix, a set $\Yc$ of $m$ vertices corresponding to the column indices, and a set of undirected edges, where an edge goes between $X_i \in \Xc$ and $Y_j \in \Yc$ when $A_{ij} \neq 0$.  
\end{Definition}

Let us return to the situation in which a row of $\A$ has two positive elements, $A_{ij}$ and $A_{ik}$.  In the bipartite graph associated with $\A$, this means that there are edges between vertices $Y_k$ and $X_i$, and between $X_i$ and $Y_j$.  In other words, there is a path between vertices $Y_k$ and $Y_j$ passing through $X_i$.  The existence of a path, and thus equality of $L_j$ and $L_k$, can be conveniently represented as the condition $\sum_{i=1}^n A_{ij} A_{ik}= [\A\tr\A]_{jk} \neq 0$.

The transitivity of equality means that if there is a path of any length between $Y_j$ and $Y_k$ (going back and forth between the $Y_i$'s and the $X_i$'s), then $L_j=L_k$.  This occurs if and only if there is some integer $m \geq 1$ such that $ [(\A\tr\A)^m]_{jk} > 0$.  When there is some such $m_{jk}$ for every $j, k \in \{1, \ldots, n\}$, this makes $\A\tr\A$ irreducible by Corollary \ref{Corollary:Irreducible}.

Therefore, if $\A\tr\A$ is irreducible in addition to $\A$ being irreducible, the equality conditions \eqcc\ imply $L_i \equiv L$ for all $i=1, \ldots, n$ (thus $\E$ in \eqref{eq:1.29} is a scalar matrix), hence $\Delta_i = \log \alpha$ for all $i$, so $\Delt = \D - \C = \log \alpha \ \I$, satisfying \eqref{eq:CDcI}, hence $\A$ has \PS.  The sufficient-part of the theorem is thus proven.

The necessary-part means that if $\A\tr\A$ is reducible, then $\A$ does not have \PS.  This means that the equality conditions \eqref{eq:EqualityConditions} can be met even while $\Delt \neq c \ \I$ for any $c \in \Reals$.  

To show this, let $\A\tr\A$ be reducible.  Then there exist $j$ and $k$ such that $[(\A\tr\A)^m]_{jk} $ $= 0$ for all integers $m \geq 1$.  For this pair $j$ and $k$ there is no $m \geq 1$ that gives positive $[(\A\tr\A)^m]_{jk}$ to imply $L_j =L_k$.  Hence we may set $L_j \neq L_k$ and still meet \eqcc.  From Lemma \ref {Lemma:LiLj}, this implies that $\Delt \neq c \ \I$ for any $c \in \Reals$.  Thus the equality conditions \eqcc\ do not require $\Delt = c \ \I$, so $\A$ does not have \PS.
\end{proof}

Application of Theorem \ref{Theorem:Main} allows Theorem 4.2 of \citet{Friedland:1981} to be sharpened as follows.

\begin{TheoremNum}[{Sharpening of Friedland's Theorem 4.2 \citealt{Friedland:1981}}]\label{Theorem:Sharpening}
\ 

Let $\Dc_n$ be the set of $n \times n$ real-valued diagonal matrices.  Let $\A$ be a fixed $n \times n$  non-negative matrix having a positive spectral radius. 

Then:
\enumlist{
\item\  for every $\C, \D \in \Dc_n$, $t \in (0, 1)$,
\an{\label{eq:ConvexCombination}
\log r(e^{(1-t) \C + t \D}\A) \leq (1-t) \log r(e^{\C}\A) + t \log r(e^{\D} \A);
}
\item\  if $\D_1 - \D_2$ is scalar, equality holds in \eqref{eq:ConvexCombination};
\item\  the following are equivalent:
\enumlist{
\item\  $\A$ and $\A\tr\A$ are irreducible;
\item\  equality holds in \eqref{eq:ConvexCombination} only if $\D_1 - \D_2$ is scalar;
\item\  strict inequality holds in \eqref{eq:ConvexCombination} for all pairs $\D_1, \D_2 \in \Dc_n$ for which $\D_1 - \D_2$ is nonscalar.
}
}
\end{TheoremNum}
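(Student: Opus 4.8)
The plan is to deduce all three items from the always-valid convexity inequality together with Theorem~\ref{Theorem:Main}; essentially nothing new is needed beyond careful bookkeeping, since Theorem~\ref{Theorem:Main} already carries the substantive combinatorial content.

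For Item~1 I would first reduce to the irreducible case by the Frobenius-normal-form device already used in the proof of Theorem~\ref{Theorem:Reducible}: writing $r(e^{\D}\A)=\max_{k} r(e^{\D_k}\A_k)$ over the irreducible diagonal blocks $\A_k$, each functional $\D\mapsto \log r(e^{\D_k}\A_k)$ is convex by the corollary to Theorem~\ref{Theorem:NussbaumExcerpt} (Nussbaum's Remark~1.2, \citet{Nussbaum:1986:Convexity}) applied to the irreducible block $\A_k$, and a pointwise maximum of convex functionals is convex; hence \eqref{eq:ConvexCombination} holds for all $\C,\D\in\Dc_n$ and $t\in(0,1)$ (alternatively this is just the convexity half of Theorem~\ref{Theorem:Friedland}). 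For Item~2 I would verify directly the elementary ``if'' direction noted in Section~2: if $\D_1-\D_2=c\,\I$ then $(1-t)\D_1+t\D_2=\D_2+(1-t)c\,\I$, so $e^{(1-t)\D_1+t\D_2}\A=e^{(1-t)c}\,e^{\D_2}\A$ and $\log r(e^{(1-t)\D_1+t\D_2}\A)=(1-t)c+\log r(e^{\D_2}\A)$; since also $\log r(e^{\D_1}\A)=c+\log r(e^{\D_2}\A)$, substituting into the right-hand side of \eqref{eq:ConvexCombination} returns the same value, so equality holds.

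For Item~3, the equivalence (b)$\Leftrightarrow$(c) is immediate from Item~1: because \eqref{eq:ConvexCombination} always holds, ``equality fails at $t$'' and ``strict inequality holds at $t$'' are the same statement, so ``equality $\Rightarrow$ $\D_1-\D_2$ scalar'' is exactly the contrapositive of ``$\D_1-\D_2$ nonscalar $\Rightarrow$ strict inequality''. For (a)$\Leftrightarrow$(b) I would rename $(\D_1,\D_2)$ as $(\C,\D)$; then, quantified over all $\C,\D\in\Dc_n$ and all $t\in(0,1)$, statement (b) asserts precisely that \eqref{eq:CDI} implies \eqref{eq:CDcI}, which is the definition of $\A$ having \PS, and Theorem~\ref{Theorem:Main} identifies this with (a). The one point needing care --- and the only thing I would call an obstacle --- is the quantifier on $t$: \PS\ is phrased with ``equality for some $t\in(0,1)$'', while (b) as written ranges over all $t$. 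For irreducible $\A$ these coincide because, by Nussbaum's Remark~1.2 \citet{Nussbaum:1986:Convexity}, equality in \eqref{eq:ConvexCombination} at a single $t\in(0,1)$ forces equality at every $t\in[0,1]$; for reducible $\A$, both (a) and \PS\ already fail (Theorems~\ref{Theorem:Reducible} and \ref{Theorem:Main}), and the pair $\C,\D$ built in the proof of Theorem~\ref{Theorem:Reducible} makes \eqref{eq:ConvexCombination} an equality for all $t\in[0,1]$ while $\C-\D$ is nonscalar, so (b) fails too. Hence (a), (b), (c) are equivalent in every case, and the remaining work is just keeping the $(\C,\D)$/$(\D_1,\D_2)$ notation consistent.
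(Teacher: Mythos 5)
Your proposal is correct and follows essentially the same route as the paper, which gives no separate proof of this theorem but simply presents it as an application of Theorem~\ref{Theorem:Main} together with the convexity statement already contained in Theorem~\ref{Theorem:Friedland} (equivalently, Nussbaum's Remark~1.2) and the elementary computation for the scalar case. Your extra care about the quantifier on $t$ is sound but not really an obstacle, since ``$(\exists t\, P(t))\Rightarrow Q$'' is logically the same as ``$\forall t\,(P(t)\Rightarrow Q)$'' when $Q$ does not depend on $t$; the Nussbaum all-$t$ rigidity and the reducible-case construction you cite only reinforce this.
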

The condition that $\A\tr\A$ be irreducible in order to yield strict inequality also arises in Lemmas 3, 4 and 5 of Cohen et al.\ \citet{Cohen:Friedland:Kato:and:Kelly:1982}.  It is notable that they arrive at this condition through analytic means, rather than matrix-combinatorial path used here.  Specifically, $\A\tr\A$ enters through the matrix norm $\norm{\A} \eqdef r(\A\cc\A)$, where the complex conjugate $\A\cc = \A\tr$ when $\A$ is real. 

In their Lemmas 3, 4 and 5, the condition that $\A\tr\A$ be irreducible for strict inequality
is accompanied by the condition that $\A^2$ also be irreducible.  
In Theorem \ref{Theorem:Two-Fold}, next, we shall see that irreducibility of both $\A^2$ and $\A\tr\A$ is equivalent to irreducibility of both $\A$ and $\A\tr\A$.  It may therefore make sense  to call such matrices \emph{two-fold irreducible}. 

\begin{Definition}[Two-fold Irreducibility]
A nonnegative square matrix $\A$ is called \emph{two-fold irreducible} if $\A$ and $\A\tr\A$ are irreducible.
\end{Definition}

Two-fold irreducibility, then is the underlying condition that is necessary and sufficient for strict inequality in Theorem \ref{Theorem:Sharpening} here, and in Lemmas 3, 4 and 5 of \citet{Cohen:Friedland:Kato:and:Kelly:1982}. 

The proof of Theorem \ref{Theorem:Two-Fold} requires the following theorem from \citet {Brualdi:and:Ryser:1991} (restated in  \citet[p. 29-10]{Hogben:2007}), regarding the \emph{index of imprimitivity} or \emph{period} of $\A$, which is the greatest common divisor of the length of all cycles in $\A$.

\begin{TheoremNum}[{Brualdi and Ryser Theorem 3.4.5 \citealt{Brualdi:and:Ryser:1991}}]\label{Theorem:Brualdi+Ryser}
Let $\A$ be an irreducible, nonnegative matrix with index of imprimitivity $\gamma \geq 2$. Let $m$ be a positive integer.  Then $\A^ m $ is irreducible if and only if $m$ and $\gamma$ are relatively prime.
\end{TheoremNum}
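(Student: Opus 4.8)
The plan is to argue entirely in the directed graph $G$ of $\A$, whose vertices are $\{1,\dots,n\}$ and which has an arc from $j$ to $i$ for each nonzero entry $A_{ij}$. By Theorem~\ref{Theorem:Path} and Corollary~\ref{Corollary:Irreducible}, irreducibility of $\A$ is strong connectivity of $G$, irreducibility of $\A^m$ is strong connectivity of the digraph of $\A^m$, and $[\A^{mM}]_{ij}>0$ iff $G$ has a walk of length $mM$ from $j$ to $i$. First I would recall the canonical cyclic partition of a strongly connected $G$ of period $\gamma$: fixing a base vertex $v_0$ and setting $c(v):=(\text{length of any walk } v_0\to v)\bmod\gamma$ is well defined, since any two such walks differ in length by a multiple of $\gamma$, and it splits the vertices into nonempty classes $V_0,\dots,V_{\gamma-1}$ with every arc running from $V_r$ into $V_{r+1\bmod\gamma}$; consequently every walk of length $\ell$ from $a$ to $b$ satisfies $\ell\equiv c(b)-c(a)\pmod{\gamma}$. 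In particular $v_0\in V_0$ and every out-neighbor of $v_0$ lies in $V_1$, and both $V_0$ and $V_1$ are nonempty because $\gamma\geq 2$ forces $n\geq 2$.

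The single substantive ingredient I need is the converse reachability statement: for each ordered pair $(a,b)$ there is a threshold $N_{ab}$ such that $G$ has a walk of length exactly $\ell$ from $a$ to $b$ for every $\ell\geq N_{ab}$ with $\ell\equiv c(b)-c(a)\pmod{\gamma}$. This is the classical cyclic normal form of an irreducible nonnegative matrix ($\A^{\gamma}$ is permutation-similar to a block-diagonal matrix whose diagonal blocks, indexed by the classes $V_r$, are primitive), which I would cite from \citet{Brualdi:and:Ryser:1991}; alternatively it follows from the partition above, since the lengths of closed walks through $v_0$ form an additive semigroup of positive integers with gcd $\gamma$ and hence contain every sufficiently large multiple of $\gamma$, and one such closed walk can be inserted at $v_0$ into a fixed walk $a\to v_0\to b$.

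Granting this, sufficiency is immediate. If $\gcd(m,\gamma)=1$, then for any indices $(i,j)$ the congruence $mM\equiv c(i)-c(j)\pmod{\gamma}$ has a solution $M$, and adding $\gamma$ to $M$ preserves the congruence, so I may choose such an $M\geq 1$ with $mM\geq N_{ji}$; then $G$ has a walk of length $mM$ from $j$ to $i$, i.e. $[(\A^m)^M]_{ij}=[\A^{mM}]_{ij}>0$, and since $(i,j)$ was arbitrary Corollary~\ref{Corollary:Irreducible} shows $\A^m$ is irreducible.

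For necessity, suppose $d:=\gcd(m,\gamma)\geq 2$; take $i=v_0\in V_0$ and $j$ any out-neighbor of $v_0$, so $j\in V_1$. Every walk from $j$ to $i$ in $G$ has length $\equiv c(i)-c(j)\equiv -1\pmod{\gamma}$, hence $\equiv -1\pmod{d}$, whereas every multiple $mM$ of $m$ is $\equiv 0\pmod{d}$ and $-1\not\equiv 0\pmod{d}$ because $d\geq 2$. Thus $G$ has no walk of length $mM$ from $j$ to $i$ for any $M\geq 1$, i.e. $[(\A^m)^M]_{ij}=0$ for all $M\geq 1$, so $\A^m$ is reducible by Corollary~\ref{Corollary:Irreducible}. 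The only real obstacle I anticipate is the cyclic normal form of the second paragraph; once it is in hand (by citation, or by the semigroup argument indicated), both directions reduce to elementary congruence bookkeeping.
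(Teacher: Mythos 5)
The paper does not prove this statement at all: it is imported verbatim as Theorem 3.4.5 of Brualdi and Ryser and used as a black box in the proof of Theorem \ref{Theorem:Two-Fold}, so there is no in-paper argument to compare against. Your proposal is a correct, self-contained proof by the standard route: the cyclic partition $V_0,\dots,V_{\gamma-1}$ of the strongly connected digraph, the congruence $\ell\equiv c(b)-c(a)\pmod{\gamma}$ for walk lengths, and the numerical-semigroup fact that closed-walk lengths at a fixed vertex eventually contain every sufficiently large multiple of $\gamma$. Both directions then check out: invertibility of $m$ modulo $\gamma$ gives, for every ordered pair, some power $M$ with $[(\A^m)^M]_{ij}>0$, and when $d=\gcd(m,\gamma)\geq 2$ the pair $(v_0,j)$ with $j\in V_1$ witnesses reducibility since $-1\not\equiv 0\pmod d$. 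Two small points of hygiene: your final appeal to Corollary \ref{Corollary:Irreducible} to conclude \emph{reducibility} uses the converse of that corollary as stated (it is phrased only as a sufficient condition); the needed iff is standard and is essentially Theorem \ref{Theorem:Path}, so you should cite that instead or note the equivalence. And the assertions that any two walks $v_0\to v$ have lengths congruent modulo $\gamma$, and that the gcd of closed-walk lengths through $v_0$ equals $\gamma$ (not merely a multiple of it), deserve a sentence each -- both follow from decomposing closed walks into cycles and splicing a cycle through an arbitrary vertex into a round trip from $v_0$ -- but these are exactly the facts underlying the cyclic normal form you offer to cite, so the argument is sound as it stands.
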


Theorem \ref {Theorem:Brualdi+Ryser} will be used in the proof of the following equivalence.
\begin{TheoremNum}[Two-fold Irreducibility]\label{Theorem:Two-Fold}
For a nonnegative square matrix $\A$, the following are equivalent:
\begin{enumerate}[1.]
\item\  $\A$ and $\A\tr\A$ are irreducible;\label{item:A-AtA}
\item\  $\A^2$ and $\A\tr\A$ are irreducible;\label{item:A2-AtA}
\item\  $\A$ and $\A\A\tr$ are irreducible; \label{item:A-AAt}
\item\  $\A^2$ and $\A\A\tr$ are irreducible; \label{item:A2-AAt}
\item\  $\A$, $\A^2$, $\A\tr\A$, and $\A\A\tr$ are irreducible; \label{item:A-AtA-AAt}
\item\   the directed graph of $\A$ is strongly connected and the simple bipartite graph of $\A$ is connected.\label{item:Connected}
\end{enumerate}
\end{TheoremNum}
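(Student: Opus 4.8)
The plan is to recast all six conditions as statements about the directed graph $\Gamma(\A)$ and the simple bipartite graph $G(\A)$ of $\A$, and then exploit two elementary facts. First, $[\A\tr\A]_{jk}\neq 0$ precisely when columns $j$ and $k$ of $\A$ have a nonzero entry in a common row, i.e.\ when $Y_j$ and $Y_k$ have a common neighbour in $G(\A)$; symmetrically $[\A\A\tr]_{ik}\neq 0$ precisely when $X_i$ and $X_k$ have a common neighbour. Second, if $\A$ is reducible then so is $\A^2$, since after a symmetric permutation $\A$ is block lower triangular and that form survives squaring; equivalently, $\A^2$ irreducible forces $\A$ irreducible. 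Recall also that $\A$ irreducible is the same as $\Gamma(\A)$ being strongly connected (Theorem~\ref{Theorem:Path}).

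The reducible case is then immediate: if $\A$ is reducible, $\Gamma(\A)$ is not strongly connected and $\A^2$ is reducible, so each of conditions \ref{item:A-AtA}--\ref{item:Connected} fails and the equivalence is vacuous. So assume $\A$ is irreducible; the case $n=1$ is trivial, so also take $n\geq 2$, whence $\A$ has no zero row or column and every vertex of $G(\A)$ has a neighbour. I would first prove the bridging lemma that, for such $\A$, $\A\tr\A$ is irreducible $\iff$ $G(\A)$ is connected $\iff$ $\A\A\tr$ is irreducible. Indeed, by Corollary~\ref{Corollary:Irreducible}, $\A\tr\A$ is irreducible iff for every $j,k$ some power $[(\A\tr\A)^m]_{jk}$ is positive, which holds iff there is a walk in $G(\A)$ from $Y_j$ to $Y_k$ (each such power records a concatenation of length-two hops $Y\!-\!X\!-\!Y$); since $[\A\tr\A]_{jj}=\sum_i A_{ij}^2>0$ and every $X$-vertex has a $Y$-neighbour, ``all $Y_j$ lie in one component of $G(\A)$'' is the same as ``$G(\A)$ is connected''. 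The $\A\A\tr$ statement is the transpose. This already gives \ref{item:A-AtA} $\Leftrightarrow$ \ref{item:Connected} $\Leftrightarrow$ \ref{item:A-AAt}.

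The crux --- and the step I expect to be the main obstacle --- is to show that \ref{item:Connected} forces $\A$ to be primitive. For this I would invoke the cyclic-class decomposition of an irreducible matrix \citep{Brualdi:and:Ryser:1991}: if the index of imprimitivity is $\gamma$, the index set partitions into nonempty classes $V_0,\dots,V_{\gamma-1}$ such that $A_{ij}\neq 0$ with $j\in V_t$ forces $i\in V_{t+1}$ (indices read modulo $\gamma$). Define a colouring of $G(\A)$ with values in $\Integers/\gamma$ by $\phi(Y_j)=t$ for $j\in V_t$ and $\phi(X_i)=t-1$ for $i\in V_t$. Every edge of $G(\A)$ joins two vertices of equal colour, so $\phi$ is constant on each connected component; but $\phi$ takes all $\gamma$ values on $\{Y_1,\dots,Y_n\}$, so $G(\A)$ has at least $\gamma$ components. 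Hence $G(\A)$ connected forces $\gamma=1$, i.e.\ $\A$ is primitive, and therefore $\A^m$ is irreducible for every $m\geq 1$; in particular $\A^2$ is irreducible. (Theorem~\ref{Theorem:Brualdi+Ryser} supplies the complementary fact that, for irreducible $\A$, $\A^2$ is irreducible iff $\gamma$ is odd, which could be substituted wherever only irreducibility of $\A^2$ is needed.)

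Finally I would assemble the implications. From \ref{item:A-AtA} $\Leftrightarrow$ \ref{item:Connected} $\Leftrightarrow$ \ref{item:A-AAt} and the previous paragraph, \ref{item:Connected} implies that $\A$, $\A^2$, $\A\tr\A$, and $\A\A\tr$ are all irreducible, which is \ref{item:A-AtA-AAt}; and \ref{item:A-AtA-AAt} trivially implies each of \ref{item:A-AtA}, \ref{item:A2-AtA}, \ref{item:A-AAt}, \ref{item:A2-AAt}, since the hypotheses of each of these form a sub-collection of those of \ref{item:A-AtA-AAt}. To close the loop through the two remaining items, note that \ref{item:A2-AtA} asserts $\A^2$ and $\A\tr\A$ irreducible, and since $\A^2$ irreducible forces $\A$ irreducible we obtain \ref{item:A-AtA}; symmetrically \ref{item:A2-AAt} $\Rightarrow$ \ref{item:A-AAt}. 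Thus all six conditions are equivalent.
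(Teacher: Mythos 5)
Your proof is correct, and it reaches the equivalence by a route that is genuinely more self-contained than the paper's, although both ultimately turn on the cyclic structure of imprimitive irreducible matrices. The paper first links items \ref{item:A-AtA} and \ref{item:A2-AtA} (and likewise \ref{item:A-AAt} and \ref{item:A2-AAt}) by invoking Theorem \ref{Theorem:Brualdi+Ryser}: if $\A$ is irreducible but $\A^2$ is reducible, then $2$ divides the period $\gamma$, so $\A$ admits a two-block cyclic normal form under which $\A\tr\A$ and $\A\A\tr$ become visibly block-diagonal, hence reducible. It then obtains \ref{item:A-AtA}$\iff$\ref{item:A-AAt} by essentially the same no-zero-row/no-zero-column observation you use in your bridging lemma, and it disposes of item \ref{item:Connected} by citing Berman--Grone and Boche--Stanczak. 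You instead prove the bridging equivalence \ref{item:A-AtA}$\iff$\ref{item:Connected}$\iff$\ref{item:A-AAt} from scratch via even-length walks in the bipartite graph, and you replace the Brualdi--Ryser step by the $\Integers/\gamma$ colouring of the bipartite graph along the cyclic classes. That colouring is the real divergence: it shows that connectedness of the bipartite graph forces $\gamma=1$, i.e.\ primitivity, which is strictly stronger than the irreducibility of $\A^2$ that the paper extracts at this point (the paper only establishes primitivity separately, in Proposition \ref{Proposition:Primitivity}, by essentially the same cyclic-normal-form computation). The trade-off is clear: the paper's proof is shorter because it outsources two steps to the literature and to Theorem \ref{Theorem:Brualdi+Ryser}, while yours is self-contained, avoids that theorem entirely, and delivers primitivity as a free byproduct inside the proof of the equivalence.
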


\begin{proof}
If $\A^2$ is irreducible, then $\A$ is irreducible, so irreducibility of $\A^2$ and $\A\tr\A$ implies irreducibility of $\A$ and $\A\tr\A$, and irreducibility of $\A^2$ and $\A\A\tr$  irreducibility of $\A$ and $\A\A\tr$.
Conversely, suppose that $\A$ is irreducible and $\A^2$ is reducible.  Then by Theorem \ref{Theorem:Brualdi+Ryser}, $2$ and the index of imprimitivity $\gamma$ are not relatively prime, hence $2$ divides $\gamma$, which means $\gamma \geq 2$ and $\A$ is imprimitive.  

Hence $\A$ may be put into the following cyclic normal form by some permutation matrix $\P$ (\citealt[Sec. 2.2]{Berman:and:Plemmons:1994};  \citealt[Sec. 3.4]{Brualdi:and:Ryser:1991}; 
\citealt[Sec. 3.3Ð3.4] {Minc:1988}; restated in \citealt[p. 29-10]{Hogben:2007}):
\ab{
 \A  = \P\tr \Bmatr{
\0 & \B_2 \\
\B_1 & \0
}\P.
}
This yields
\ab{
 \A\tr\A  & =
\P\tr \Bmatr{
\0 & \B_1\tr \\
\B_2\tr & \0
}\P
\P\tr \Bmatr{
\0 & \B_2 \\
\B_1 & \0
}\P
= 
\P\tr \Bmatr{
\B_1\tr \B_1 & \0 \\
 \0& \B_2\tr \B_2
}\P,\\
\intertext{and}
 \A\A\tr  & =
\P\tr \Bmatr{
\0 & \B_2 \\
\B_1 & \0
}\P
\P\tr \Bmatr{
\0 & \B_1\tr \\
\B_2\tr & \0
}\P
= 
\P\tr \Bmatr{
\B_2\B_2\tr  & \0 \\
 \0& \B_1 \B_1\tr
}\P.
}
The presence of the two $\0$ block matrices makes $\A\tr\A$ and $\A\A\tr$ reducible.   Thus for irreducible $\A$, the assumption that $\A\tr\A$ or $\A\A\tr$ are irreducible implies by contrapositive inference that $\A^2$ is irreducible.  Thus far it is shown \ref{item:A-AtA}$\iff$\ref{item:A2-AtA} and \ref{item:A-AAt}$\iff$\ref{item:A2-AAt}.

In the bipartite graph of $\A$, irreducibility of $\A\tr\A$ means that the $\Yc$ vertices are connected.  Irreducibility of $\A$ requires that each row have at least one positive element, and thus each $\Xc$ vertex is connected to the connected $\Yc$ vertices, making the entire bipartite graph connected, in particular the $\Xc$ vertices.  Thus $\A\A\tr$ is irreducible.  Similarly, irreducibility of $\A$ requires that each column have at least one positive element, so combined with irreducibility of $\A\A\tr$, the same argument yields that $\A\tr\A$ is irreducible.  This gives us \ref{item:A-AtA}$\iff$\ref{item:A-AAt}, and ties together in equivalence \ref{item:A-AtA}, \ref{item:A2-AtA}, \ref{item:A-AAt}, \ref{item:A2-AAt}, hence \ref {item:A-AtA-AAt}.  In addition, \ref{item:A-AtA}$\iff$\ref{item:A-AAt}$\implies$\ref{item:Connected}.

Berman and Grone \citet[Lemma 2.1]{Berman:and:Grone:1988} show that the bipartite graph of $\A$ is connected if and only if $\A\A\tr$ and $\A\tr\A$ are irreducible.  Boche and Stanczak \citet[Theorem 3] {Boche:and:Stanczak:2008:Strict} show that for irreducible $\A$ the bipartite graph of $\A$ is connected if and only if $\A\A\tr$ is irreducible.
This combined with connectedness of the directed graph of $\A$ gives us \ref{item:Connected}$\implies$\ref{item:A-AtA}, \ref{item:A-AAt}.  The equivalence of all the statements is thus shown.
\end{proof}
\Remark{
Shmuel Friedland (personal communication) conjectured \ref{item:A-AtA}$\implies$\ref{item:A-AAt}.   Joel E. Cohen (personal communication) pointed out that a \emph{scrambling matrix} \citep{Hajnal:1958:Weak} will have irreducible $\A\A\tr$, since by definition, a scrambling has some $k$ such that $A_{ik}A_{jk} > 0$ for every pair $i \neq j$, so this makes $\A\A\tr$ strictly positive off the diagonal, hence irreducible. 
}
\Remark{
It should be noted that irreducibility of $\A\tr\A$ and $\A\A\tr$ does not imply irreducibility of $\A$, as can be seen with the simplest example,
\ab{
\A = \Bmatr{1&0\\1&1}, \A\tr\A= \Bmatr{2&1\\1&1}, \A\A\tr = \Bmatr{1&1\\1&2}.
}
}	

\Remark{
In a model of wireless network reception, Boche and Stanczak \citet{Boche:and:Stanczak:2008:Strict} come close to stating results the same as Theorem \ref {Theorem:Sharpening}.  Their Theorem 1 is the same as \citet[Theorem 4.2]{Friedland:1981} except that conditions for strict convexity are not addressed.  Strict convexity conditions are sought not for the spectral radius itself, but for the shape of regions $\Fc$ in $\Dc_n$ that yield a bounded spectral radius,
\ab{
\Fc(\A) \eqdef \{\D\suchthat r(e^\D \A) \leq 1\} \subset \Dc_n,
}
which comprise the feasible solutions to their signal-to-interference ratio optimization problem.  
The diagonal matrices $\D$ that they consider \citet[Appendix A, p. 1516]{Boche:and:Stanczak:2008:Strict} are not entirely general, but fall within a set $\D \in \Sc(\A)$ derived as
\ab{
 \Sc(\A) \eqdef \left\{\diagm{\log \frac{x_i}{ [\A\x]_i}}_{i=1}^n  \suchthat \x > \0 \right\} \subset \Dc_n.
}

They show that regions $\Fc(\A)$ are convex, and strictly convex provided $\A$ and $\A\A\tr$ are irreducible.  The core of their proof \citep[Eq. (10), (11), and Appendix B, p. 1517] {Boche:and:Stanczak:2008:Strict} utilizes the Cauchy-Schwartz inequality on an expression essentially the same as \eqref {eq:HolderSum}, but with exponent $t = 1/2$.
}	

\Remark{
The product $\A\tr\A$ plays an important role in Markov chains.  Kantrowitz et al.\ \citet[Theorem 2.1] {Kantrowitz:Neumann:and:Ransford:2011} show that a column stochastic matrix $\A$ is \emph{contractive} if and only if $\A\tr \A > \0$;  by contractive they mean $\norm{\A \uv - \A \vv}_1 < \norm{\uv - \vv}_1$ for all $\uv \neq \vv$,  $\uv, \vv \in \Pc_n$, where $\norm{\uv}_1 \eqdef \sum_{i=1}^n |u_i|$, and $\Pc_n$ is the set of probability vectors.  Further, they show that $\A^m \x \goesto \vv$ as $m \goesto \infty$ for all $\x \in \Pc_n$ and some $\vv \in \Pc_n$ if and only if $\A$ is `eventually scrambling' (my phrase by analogy with `eventually positive'), i.e.\ there is some integer $m$ such that $(\A^m)\tr \A^m > \0$ \citet[Theorem 2.3] {Kantrowitz:Neumann:and:Ransford:2011}.  It is notable that the product $\A\A\tr$ does not enter into these results.
}
\subsection{Ancillary Results}
The paper is concluded with a number of additional results.
\begin{PropositionNum}\label{Prop:Monotonicity}
Two-fold irreduciblity is monotonic in the sign pattern of a nonnegative matrix $\A$, i.e.\ if $\A$ has \TFY, then changing an element of $\A$ from $0$ to a positive value maintains \TFY.  
\end{PropositionNum}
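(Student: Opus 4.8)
The plan is to deduce the proposition from a one-line monotonicity principle for ordinary irreducibility, together with the observation that raising a zero entry of a nonnegative matrix raises the corresponding Gram matrix $\A\tr\A$ entrywise.

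First I would record the following elementary lemma: if $\A$ and $\B$ are nonnegative $n \times n$ matrices with $A_{ij} \leq B_{ij}$ for every $i,j$, and $\A$ is irreducible, then $\B$ is irreducible. This is immediate from Theorem \ref{Theorem:Path}: for any pair of indices $(i,j)$, a sequence of nonzero elements of $\A$ running from $i$ to $j$ (or the single nonzero entry $A_{ij}$) consists of entries that are also nonzero in $\B$, so the same sequence witnesses irreducibility of $\B$. (Alternatively one invokes Corollary \ref{Corollary:Irreducible}, since $[\A^m]_{ij} > 0$ forces $[\B^m]_{ij} > 0$ by nonnegativity.)

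Now let $\A$ be two-fold irreducible and let $\A'$ be obtained from $\A$ by changing some entry $A_{ij} = 0$ to $A'_{ij} > 0$, leaving all other entries fixed. Then $\A' \geq \A$ entrywise, so the lemma gives that $\A'$ is irreducible. For the Gram matrix, compare entrywise
\[
[\A'\tr\A']_{\ell k} = \sum_{m=1}^n A'_{m\ell} A'_{mk} \;\geq\; \sum_{m=1}^n A_{m\ell} A_{mk} = [\A\tr\A]_{\ell k},
\]
the inequality holding term by term: the only altered factor is $A'_{ij} \geq A_{ij} = 0$, every factor is nonnegative, and hence no summand decreases. So $\A'\tr\A' \geq \A\tr\A$ entrywise, and applying the lemma to the irreducible matrix $\A\tr\A$ shows $\A'\tr\A'$ is irreducible. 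Thus $\A'$ is two-fold irreducible, as claimed.

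There is no substantive obstacle; the one step requiring care is the term-by-term comparison $[\A'\tr\A']_{\ell k} \geq [\A\tr\A]_{\ell k}$, which uses nonnegativity of $\A$ in an essential way (it is false for matrices of mixed sign, where increasing one entry can cancel a cross term). A second, purely graph-theoretic route is also available through item \ref{item:Connected} of Theorem \ref{Theorem:Two-Fold}: changing a zero entry of $\A$ to a positive one merely adjoins an arc to the directed graph of $\A$ and an edge to its simple bipartite graph, and adjoining arcs to a strongly connected digraph, or edges to a connected graph, cannot destroy strong connectivity or connectivity.
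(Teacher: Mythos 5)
Your proposal is correct and follows essentially the same route as the paper, which likewise argues that the sign pattern of $\A\tr\A$ is monotonic in the sign pattern of $\A$ and that irreducibility is monotonic in the sign pattern of a nonnegative matrix; you have simply spelled out the entrywise comparison that the paper calls immediate. The paper additionally offers a second argument via the equality conditions \eqref{eq:i!=j}, akin in spirit to your graph-theoretic aside, but nothing further is needed.
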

\begin{proof}
This is immediate since the sign pattern of $\A\tr\A$ is monotonic in the sign pattern of $\A$, and irreducibility is monotonic in the sign pattern of a nonnegative matrix.  

It can also be seen through direct examination of \eqref{eq:i!=j}.  From Theorem \ref {Theorem:Main}, \TFY\ and therefore \PS\ implies for equality condition \eqref{eq:i!=j} that $L_i=L_j$ for all $i,j$.  Suppose that $A_{ij}=0$ and we change it to be $A_{ij}>0$.  This adds a new constraint to the equality conditions that $\Delta_i = \log \alpha + L_j - L_i$.  However this equation is already satisfied when $\A$ has \PS, which it does by hypothesis, and so the additional equation has no effect.
\end{proof}

\begin{PropositionNum}\label{Prop:FullIndecomp}
Full indecomposability is sufficient but not necessary for \TFY.
\end{PropositionNum}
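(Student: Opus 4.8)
The plan is to handle the two halves --- sufficiency and non-necessity --- separately, routing the first through the bipartite-graph characterisation already available as condition~\ref{item:Connected} of Theorem~\ref{Theorem:Two-Fold}. For sufficiency I would first record the combinatorial reading of full indecomposability that makes it comparable with \TFY: for $\A$ of order $n \ge 2$, failing to be fully indecomposable means exactly that $\A$ possesses an all-zero $p \times q$ submatrix with $p,q \ge 1$ and $p+q = n$ (equivalently $p+q \ge n$, since a larger zero submatrix can be trimmed) --- permuting the rows and columns that index such a zero block into the northeast corner yields the block form $\Bmatr{\ast & \0 \\ \ast & \ast}$ with square diagonal blocks of orders $p$ and $n-p$, which is the partly decomposable form, and conversely. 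Two consequences are then immediate for a fully indecomposable $\A$: every row and every column has a nonzero entry (a zero line is a $1 \times n$ or $n \times 1$ forbidden zero submatrix), and $\A$ is irreducible (a reduction $\P\tr \A \P = \Bmatr{\A_1 & \0 \\ \A_{21} & \A_2}$ is itself a partly decomposable form).

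Next I would show that the simple bipartite graph $G(\A)$ is connected. Arguing by contradiction, suppose it is not, and let $\Xc_1 \subseteq \Xc$ and $\Yc_1 \subseteq \Yc$ be the row- and column-vertices of one component, with $\Xc_2 = \Xc \setminus \Xc_1$, $\Yc_2 = \Yc \setminus \Yc_1$, and $p = |\Xc_1|$, $q = |\Yc_1|$. Since every vertex of $G(\A)$ has degree at least $1$, the component contains an edge, so $\Xc_1$ and $\Yc_1$ are nonempty; and if, say, $\Xc_2 = \varnothing$, then each $Y_j \in \Yc_2$ would have all of its (at least one) neighbours in $\Xc_1$ and hence lie in the component, a contradiction --- so $\Xc_2$ and $\Yc_2$ are nonempty as well. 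Because there are no edges between $\Xc_1$ and $\Yc_2$ nor between $\Xc_2$ and $\Yc_1$, the matrix $\A$ has an all-zero submatrix on rows $\Xc_1$, columns $\Yc_2$ of size $p \times (n-q)$, and another on rows $\Xc_2$, columns $\Yc_1$ of size $(n-p) \times q$. Their dimension sums are $n + (p-q)$ and $n + (q-p)$, so at least one is $\ge n$ --- contradicting full indecomposability. Hence $G(\A)$ is connected; combined with irreducibility of $\A$ this is exactly condition~\ref{item:Connected} of Theorem~\ref{Theorem:Two-Fold}, which gives that $\A$ and $\A\tr\A$ are irreducible, i.e.\ $\A$ is \TFI. (The case $n = 1$ is trivial.)

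For non-necessity I would simply exhibit $\A = \Bmatr{1 & 1 \\ 1 & 0}$. It is irreducible, and $\A\tr\A = \Bmatr{2 & 1 \\ 1 & 1}$ has a positive off-diagonal entry and so is irreducible; hence $\A$ is \TFI\ (equivalently, its directed graph is strongly connected --- a $2$-cycle with a loop --- and its bipartite graph is a path $Y_2\,$--$\,X_1\,$--$\,Y_1\,$--$\,X_2$, hence connected). But $\A$ is not fully indecomposable: swapping its two rows gives $\Bmatr{1 & 0 \\ 1 & 1}$, which is in the partly decomposable form with $1 \times 1$ diagonal blocks --- equivalently, the entry $A_{22} = 0$ is a $1\times 1$ zero submatrix with $p+q = 2 = n$.

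The only step with any subtlety is the disconnection argument when the two sides of the component have unequal size, $p \ne q$; the resolution is precisely that the two complementary zero blocks then have dimension sums $n+(p-q)$ and $n+(q-p)$, at least one of which is $\ge n$, so a forbidden zero submatrix is produced regardless of which side is larger. Everything else is bookkeeping, with the reduction to Theorem~\ref{Theorem:Two-Fold} doing the substantive work.
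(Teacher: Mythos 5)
Your proof is correct, but it takes a genuinely different route from the paper's on both halves. For sufficiency the paper does not argue combinatorially at all: it simply quotes Friedland's Theorem 4.2 (full indecomposability implies \PS) and then appeals to Theorem \ref{Theorem:Main} (\PS\ iff \TFY) --- an analytic detour through strict log-convexity of the spectral radius. You instead prove directly that full indecomposability forces irreducibility together with connectedness of the bipartite graph and invoke condition \ref{item:Connected} of Theorem \ref{Theorem:Two-Fold}; your accounting of the two complementary zero blocks with dimension sums $n+(p-q)$ and $n+(q-p)$, at least one of which is $\ge n$, is sound (as is the trimming remark reducing $p+q\ge n$ to $p+q=n$ for $n\ge 2$), so you obtain a self-contained combinatorial proof that never touches the spectral radius. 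For non-necessity the paper builds a family of partly decomposable \TFI\ matrices of every order $n$ (an $n$-cycle, a nearly full bottom row, and one extra entry, \eqref{eq:PSDecomposable}), verifying \PS\ through the $L_i$-equality machinery; your example $\Bmatr{1&1\\1&0}$ is the minimal instance --- in fact the $n=2$ case of the extremal $(2n-1)$-element matrices mentioned in the paper's later remark attributed to Cohen --- and it settles the claim with less work, at the cost of not exhibiting the phenomenon in arbitrary dimension. Both arguments are valid; yours is more elementary and self-contained, the paper's is shorter given the results already in hand and its counterexample is more informative.
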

\begin{proof}
Friedland \citet[Theorem 4.2]{Friedland:1981} proved that full indecomposability is sufficient for \PS.  To prove that it is not necessary I construct a general example of matrices that have \PS, and thus are \TFI, but which are partly decomposable.  A specific example is illustrated or $n=5$.  Without loss of generality $1$ is used for the positive elements.  Let
\an{\label{eq:PartDecompTFI}
\A &= \Bmatr{
0&1&0& 0&1\\
1&0&0&0&0\\
0&1&0&0&0\\
0&0&1&0&0\\
1&1&1&1&0
}.
}
$\A$ contains the $5$-cycle, $5 \to 1 \to 2 \to 3 \to 4 \to 5$ which makes it irreducible.  Application of \eqref{eq:i!=j} to the bottom row's $1$s produces $L_1 = L_2 = L_3 = L_4$.  The top row gives $L_2=L_5$.  Thus all $L_i$ are equal, so $\Delt = \log \alpha \ \I$.  It is easily verified that $(\A\tr\A)^2 > \0$, hence  $\A\tr\A$ is irreducible.

$\A$ is partly decomposable, however, as can be seen from the permutation matrix $\Q$ that rotates the rows up by one, since $\Q\A$ has $\0$-submatrices of size $k$ by $5 -k $ for each $k= 1, 2,3, 4$:
\ab{
\Q\A =\Bmatr{
1&0&0&0&0\\
0&1&0&0&0\\
0&0&1&0&0\\
1&1&1&1&0\\
0&1&0& 0&1
}.
}
This example can be extended to any $n$ as follows:
\an{\label{eq:PSDecomposable}
\case{
A_{1+(j \modd n) , j} > 0, & j=1, \ldots, n\\
A_{ni} > 0, & \forall i \neq n\\
A_{1,2}  > 0, \\
A_{ij}=0, & \text{otherwise.}
}
}
The condition $A_{1+(j \modd n) , j} > 0$ for $j=1, \ldots, n$ produces an $n$-cycle, which makes $\A$ irreducible.  The condition $A_{n,i} > 0 \ \forall i \neq n$ makes all $L_i$ equal for $i=1, \ldots, n{-}1$.  $L_n$ is brought into the equality with the condition $A_{1,2}  > 0$, which in combination with $A_{1,n} > 0$ gives $L_n=L_2$.  Therefore, $L_i = L$ for $i = 1, \ldots, n$.  Substitution in \eqref{eq:i!=j} gives $\Delta_i = \log \alpha$ for $i = 1, \dots, n$.  Therefore the equality conditions \eqref{eq:EqualityConditions} imply $\Delt = \log \alpha \ \I$, so $\A$ has \PS.

To verify that $\A$ constructed according to \eqref{eq:PSDecomposable} is partly decomposable, we note that applying a permutation $\Q$ that rotates the rows of $\A$ upward by 1 satisfies $[\Q\A]_{i,n}=0, i=1, \ldots, n{-}1$.  This is a $n-1$ by $1$ submatrix of zeros, making $\A$ partly decomposable.
\end{proof}

\begin{PropositionNum}\label{Proposition:Primitivity}
Primitivity is necessary but not sufficient for \TFY.
\end{PropositionNum}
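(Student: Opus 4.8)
The plan is to prove the two halves of the proposition in turn. For \emph{necessity}, the task reduces to showing that a two-fold irreducible matrix has index of imprimitivity $\gamma = 1$, since two-fold irreducibility already includes irreducibility of $\A$. For \emph{insufficiency}, it is enough to exhibit a single primitive matrix $\A$ for which $\A\tr\A$ is reducible, because two-fold irreducibility demands that $\A\tr\A$ be irreducible.

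For necessity, suppose $\A$ is two-fold irreducible, so $\A$ and $\A\tr\A$ are both irreducible, and let $\gamma \geq 1$ be the index of imprimitivity of $\A$. I would argue by contradiction: assuming $\gamma \geq 2$, put $\A$ into the cyclic normal form already invoked in the proof of Theorem~\ref{Theorem:Two-Fold}, so that $\{1,\ldots,n\} = \Cc_1 \union \cdots \union \Cc_\gamma$ splits into nonempty classes with $A_{ij} \neq 0$ only when $j \in \Cc_m$ and $i \in \Cc_{m+1}$ for some $m$ (class labels mod $\gamma$). The key point is that $\A\tr\A$ cannot connect distinct classes: if $[\A\tr\A]_{jk} = \sum_i A_{ij} A_{ik} \neq 0$, pick $i$ with $A_{ij} \neq 0$ and $A_{ik} \neq 0$; then $j$ and $k$ both lie in the class immediately preceding that of $i$, hence in the same class. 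Taking $\Sc_1 = \Cc_1$ and $\Sc_2 = \Cc_2 \union \cdots \union \Cc_\gamma$ (both nonempty since $\gamma \geq 2$) then exhibits $\A\tr\A$ as reducible, contradicting the hypothesis; therefore $\gamma = 1$, i.e.\ $\A$ is primitive. (Equivalently, one can start from item~6 of Theorem~\ref{Theorem:Two-Fold} and run the same cyclic-class argument on the bipartite graph of $\A$.)

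The place that requires care --- and what I expect to be the main obstacle --- is precisely this last step. It is tempting to quote Theorem~\ref{Theorem:Two-Fold} only to conclude that $\A^2$ is irreducible and stop; but irreducibility of $\A^2$ excludes only \emph{even} periods (it is the case $m=2$ of Theorem~\ref{Theorem:Brualdi+Ryser}), whereas primitivity must exclude every $\gamma \geq 2$, odd ones included. The cyclic-class computation above handles all of them at once, because it shows $\A\tr\A$ respects \emph{all} $\gamma$ cyclic classes, not merely a two-colouring of them, and so is reducible whenever $\gamma \geq 2$.

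For insufficiency, I would simply produce an explicit example; the smallest such has size $n=3$ (for $n \leq 2$ an irreducible matrix whose product $\A\tr\A$ is reducible is forced to be imprimitive). Take
\an{\label{eq:PrimNotTwoFold}
\A = \Bmatr{0 & 1 & 1 \\ 0 & 1 & 1 \\ 1 & 0 & 0},
\qquad
\A\tr\A = \Bmatr{1 & 0 & 0 \\ 0 & 2 & 2 \\ 0 & 2 & 2}.
}
One checks that the directed graph of $\A$ is strongly connected and carries the loop $2 \to 2$, so $\A$ is irreducible and aperiodic, hence primitive; concretely $\A^3 = \Bmatr{1 & 2 & 2 \\ 1 & 2 & 2 \\ 1 & 1 & 1} > \0$. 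On the other hand the partition $\Sc_1 = \{1\}$, $\Sc_2 = \{2,3\}$ witnesses the reducibility of $\A\tr\A$. Thus $\A$ is primitive but, having reducible $\A\tr\A$, is not two-fold irreducible (and, by Theorem~\ref{Theorem:Main}, lacks \PS), which shows primitivity is not sufficient.
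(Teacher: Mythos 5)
Your proof is correct and follows essentially the same route as the paper: for necessity you pass to the cyclic normal form of an imprimitive irreducible matrix and observe that $\A\tr\A$ is block-diagonal over the $\gamma$ cyclic classes, hence reducible for every $\gamma \geq 2$ (not merely even $\gamma$), which is exactly the paper's argument, and your warning that irreducibility of $\A^2$ alone would only exclude even periods is well taken. For insufficiency the paper exhibits the general Wielandt family ($n$-cycle plus one shortcut, for which row $n$ of $\A\tr\A$ vanishes off the diagonal) whereas you give a single correct $3\times 3$ counterexample; both accomplish the same thing.
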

\begin{proof} \ 

\emph{Primitivity is necessary for \TFY.}  If $\A$ is irreducible but imprimitive, then there exists a permutation matrix $\P$ such that $\P \A \P\tr$ is in cyclic normal form:
\an{\label{eq:CyclicNormal}
\A  
&=\P\tr \Bmatr{
\0&\0& \cdots& \0&\0&\B_\gamma\\
\B_1&\0& \cdots&\0&\0&\0\\
\0&\B_2& \cdots&\0&\0&\0\\
&&&\cdots&& \\
\0&\0&\cdots&\B_{\gamma-2}&\0&\0\\
\0&\0& \cdots&\0&\B_{\gamma-1} &\0
}\P ,	
}
where each $\0$ block along the diagonal is a square matrix of zeros, of possibly different orders, while the $\B_h$ and $\0$ blocks off the diagonal are rectangular matrices, and $\gamma$ is the index of imprimitivity of $\A$ \citealt[p. 29-10]{Hogben:2007}.  This yields
\ab{
\A\tr\A  
&=\P\tr \Bmatr{
\B_1\tr\B_1&\0& \cdots&\0&\0&\0\\
\0&\B_2\tr\B_2& \cdots&\0&\0&\0\\
&&&\cdots&& \\
\0&\0&\cdots&\B_{\gamma-2}\tr\B_{\gamma-2}&\0&\0\\
\0&\0& \cdots&\0&\B_{\gamma-1}\tr\B_{\gamma-1} &\0\\
\0&\0& \cdots& \0&\0&\B_\gamma\tr\B_\gamma\\
}\P ,	
}
which shows $\A\tr\A$ to be reducible.  Therefore primitivity is necessary for \TFY. 

\emph{Primitivity is not sufficient for \TFY.} To show this, a general example is provided by the \emph{Wielandt matrix} \citep{Kantrowitz:Neumann:and:Ransford:2011}.  An $\nbn$ primitive matrix $\A$ is generated by taking an $n$-cycle graph and adding a shortcut edge so that it gains a circuit of length $n-1$.  Since the greatest common factor of $n$ and $n-1$ is 1, the adjacency matrix for this strongly connected directed graph is aperiodic, hence it is primitive.  The matrix has $n+1$ positive elements.  It is specified by 
\ab{
A_{3,1} & =1, \qquad A_{ij} = \delta_{i, \, 1+(j \modd n)}   \text{ otherwise.}
}
In the $n$th row, for $j = 1, \ldots, n$,
\ab{
[\A\tr\A]_{nj} &= \sum_{k=1}^n A_{kn} A_{kj}
= \sum_{k=1}^n \delta_{k, \, 1+(n \modd n)} \, \delta_{k, \, 1+(j \modd n)} \\
&= \sum_{k=1}^n \delta_{k, \, 1} \, \delta_{k, \, 1+(j \modd n)} 
=  \delta_{1, \, 1+(j \modd n)} 
= \delta_{jn}  ,
}
so row $n$ has a $1$ by $n - 1$ submatrix of zeros, making $\A\tr\A$ reducible.  

To show a concrete example, we add the shortcut $1 \to 3$ to the cycle $1\to 2 \to 3 \cdots \to 5 \to 1$:
\ab{
\A &= \Bmatr{
0&0&0& 0&1\\
1&0&0&0&0\\
1&1&0&0&0\\
0&0&1&0&0\\
0&0&0&1&0
}, 
\text{ giving }
\A\tr\A = \Bmatr{
2&1&0& 0&0\\
1&1&0&0&0\\
0&0&1&0&0\\
0&0&0&1&0\\
0&0&0&0&1
},
}
which is reducible.  To summarize, primitivity of $\A$ is necessary but not sufficient to provide \TFY.
\end{proof}

Proposition \ref {Proposition:Primitivity} can be further illustrated with a completely worked-out example.  Consider a stochastic matrix whose graph comprises two cycles: $1 \ra 2 \ra 1$ and $1 \ra 3 \ra 4 \ra 1$:
\ab{
\A = \Bmatr{0 & 1 & 0 & 1\\ 1/2 & 0&0&0\\1/2 & 0&0&0\\ 0&0&1&0}
}
We confirm $\A$ is primitive by noting that the index of primitivity is the product of the cycle periods $2$ and $3$ \citep[p. 9-7]{Hogben:2007}, $2 * 3 = 6$, and $\A^6 > \0$:
\ab{
\A^6 = \Bmatr{3/8 & 1/2 & 1/4 & 1/2 \\ 1/4 & 1/8 & 1/4 &1/8 \\ 1/4 & 1/8 & 1/4 &1/8 \\ 1/8 & 1/4 &1/4 & 1/4}.
}
The equality condition \eqref{eq:i!=j} for $A_{ij} \neq 0$, $ j\neq i$, is
$
D_{i}- C_{i}= \log \alpha + L_j -L_i .
$
The five nonzero entries of $\A$ ($A_{12}, A_{14}, A_{21}, A_{31}, A_{43}$) thus
give five constraints as the equality conditions:
\ab{
A_{12} \suchthat \qquad   D_{1}- C_{1} &= \Delta_1 = \log \alpha + L_2 - L_1,\\
A_{14} \suchthat \qquad D_{1}- C_{1} &= \Delta_1 = \log \alpha + L_4 - L_1, \\
A_{21} \suchthat \qquad D_{2}- C_{2} &= \Delta_2 = \log \alpha + L_1 - L_2, \\
A_{31} \suchthat \qquad D_{3}- C_{3} &= \Delta_3 = \log \alpha + L_1 - L_3,\\
A_{43} \suchthat \qquad D_{4}- C_{4} &= \Delta_4 = \log \alpha + L_3 - L_4.
}
Since there are $5$ constraints on $4$ variables $\Delta_i$, $4$ variables $L_i$, and variable $c$, there are at least $4+4+1-5=4$ degrees of freedom in any solution.  The above system reduces to:
\ab{
L_4 &= L_2, \\
\Delta_1 &= \log \alpha + L_2 - L_1, \\
\Delta_2 &= \log \alpha + L_1 - L_2, \\
\Delta_3 &=  \log \alpha + L_1 - L_3,\\
\Delta_4 &= \log \alpha + L_3 - L_2.
}
Thus we are free to specify $\alpha, L_1, L_2, L_3$, which includes values that make $\Delt = \D - \C$ nonscalar.

As a concrete example of nonscalar $\Delt$, let $\alpha = e^3$, $L_1 =  1$, $L_2 = -1$, and $L_3 = 2$.  Let $C_i = 0$ for $i=1,2,3,4$, so $D_i = \Delta_i$.  Then $\D = \Delt = \diaginline{(1,5,2,6)}\neq c \ \I$ for any $c \in \Reals$.
\hide{
\D = \Bmatr{
1 & 0 & 0 & 0 \\
0 & 5 & 0 & 0 \\
0 & 0 &2 & 0 \\
0 & 0 & 0 & 6 }.
}
The equality condition is met in \eqref{eq:4.17} if
\ab{
\phi(t) &\eqdef  (1{-}t) \log r(e^{ \C } \A) + t \log r(e^{ \D } \A) - \log r(e^{(1{-}t) \C + t \D} \A)  = 0.
}
Since $\C = \0$, this simplifies to:
\ab{
\phi(t) &=  t \log r(e^{ \D } \A) - \log r(e^{ t \, \D} \A) .
} 
It is readily verified that $\log r(e^{ \D } \A) = 3$ and $ \log r(e^{ t \, \D} \A) = 3 t$, hence $\phi(t)  = 0$.  Thus the equality condition is met in \eqref{eq:4.17} while $\A$ is primitive and $\D - \C \neq c \ \I$ for any $c \in \Reals$.
 
\begin{PropositionNum}\label{Proposition:TIF-Primitive}
A sign-symmetric nonnegative matrix is \TFI\ if and only if it is primitive. 
\end{PropositionNum}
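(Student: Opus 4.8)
The forward implication is essentially already in hand: Proposition~\ref{Proposition:Primitivity} shows that primitivity is necessary for two-fold irreducibility for \emph{any} nonnegative matrix, so a sign-symmetric $\A$ that is two-fold irreducible is in particular primitive. The plan is therefore to concentrate on the converse, and in fact to organize both directions around one structural fact about sign patterns. Recall that by Theorem~\ref{Theorem:Two-Fold} (equivalence of items~\ref{item:A-AtA} and~\ref{item:A2-AtA}), $\A$ is two-fold irreducible if and only if both $\A^2$ and $\A\tr\A$ are irreducible.

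The fact I would establish first is that when $\A$ is sign-symmetric, $\A^2$ and $\A\tr\A$ have exactly the same zero pattern. Since $\A \geq \0$ there is no cancellation, so $[\A^2]_{ik}\neq 0$ precisely when some $j$ has $A_{ij}\neq 0$ and $A_{jk}\neq 0$, while $[\A\tr\A]_{ik}\neq 0$ precisely when some $j$ has $A_{ji}\neq 0$ and $A_{jk}\neq 0$; sign-symmetry ($A_{ij}\neq 0 \iff A_{ji}\neq 0$) makes these two conditions identical. Because irreducibility of a nonnegative matrix depends only on its sign pattern (Corollary~\ref{Corollary:Irreducible}), this forces $\A^2$ to be irreducible if and only if $\A\tr\A$ is. Combined with the previous paragraph, the proposition reduces to showing that, for sign-symmetric $\A$, irreducibility of $\A$ together with irreducibility of $\A^2$ is equivalent to primitivity of $\A$.

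To close this, I would bring in the index of imprimitivity $\gamma$. If $\A$ is primitive then $\A$ is irreducible and every power of $\A$ is again primitive, so $\A^2$ is irreducible; by the pattern identity $\A\tr\A$ is irreducible as well, hence $\A$ is two-fold irreducible. Conversely, suppose $\A$ is sign-symmetric and two-fold irreducible, so $\A$, $\A^2$ and $\A\tr\A$ are all irreducible. The case $n=1$ is immediate, so assume $n\geq 2$: strong connectivity of the digraph of $\A$ forces an arc $j\to i$ with $i\neq j$, and sign-symmetry supplies the reverse arc, producing a $2$-cycle; hence $\gamma$ divides $2$. If $\gamma=2$, Theorem~\ref{Theorem:Brualdi+Ryser} makes $\A^2$ reducible, which by the pattern identity forces $\A\tr\A$ reducible, contradicting two-fold irreducibility. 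Therefore $\gamma=1$, i.e.\ $\A$ is primitive.

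The only genuinely new ingredient is the zero-pattern identity for $\A^2$ and $\A\tr\A$ under sign-symmetry; once that is in place the argument is just bookkeeping with Theorems~\ref{Theorem:Two-Fold} and~\ref{Theorem:Brualdi+Ryser}. The one place deserving care is the step producing a $2$-cycle, which uses both $n\geq 2$ and irreducibility (a one-vertex digraph, or one with no off-diagonal arcs, behaves differently); this is routine but worth stating explicitly. As a byproduct, the converse argument is self-contained and re-derives, in the sign-symmetric case, the half of Proposition~\ref{Proposition:Primitivity} invoked at the start.
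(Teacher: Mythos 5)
Your proposal is correct and follows essentially the same route as the paper: both hinge on the observation that for a symmetric sign pattern $\A\tr\A$ and $\A^2$ have identical zero patterns (the paper passes to the sign-pattern matrix, where literally $\A\tr\A=\A^2$), and both get sufficiency from the fact that powers of a primitive matrix are primitive while citing Proposition~\ref{Proposition:Primitivity} for necessity. Your extra self-contained derivation of necessity via the $2$-cycle and Theorem~\ref{Theorem:Brualdi+Ryser} is sound but redundant given that citation.
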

\begin{proof}
By Proposition \ref {Proposition:Primitivity}, primitivity is necessary for \TFY.  It remains to be proven that primitivity is sufficient here.  A matrix $\B$ is \TFI\ if and only if its  sign pattern matrix, $\A$, is \TFI.  A sign-symmetric matrix $\B$ by definition yields $\A = \A\tr$.  Assume $\A$ to be symmetric and primitive.  Primitivity means there is some integer $m\geq 1$ such that $\A^m > \0$ \citep[Theorem 8, p. 80]{Gantmacher:1959vol2}, so clearly $\A^{2m} > \0$, hence $\A^2$ is primitive as well \citep[Corollary 1, p. 82]{Gantmacher:1959vol2}.   Thus $\A\tr \A = \A^2$ is irreducible, hence $\B\tr\B$ is irreducible, making $\A$ and $\B$ \TFI.
\end{proof}

\begin{PropositionNum}\label{Proposition:PrimitiveOrBipartite}
An irreducible sign-symmetric nonnegative matrix is either primitive or cyclic of period $2$.
\end{PropositionNum}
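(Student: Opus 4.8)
The plan is to argue directly from the combinatorial description of the period already recalled just before Theorem~\ref{Theorem:Brualdi+Ryser}: for an irreducible matrix the index of imprimitivity $\gamma$ equals the greatest common divisor of the lengths of all cycles in its directed graph. The two hypotheses feed this fact from opposite sides. Irreducibility of $\A$ makes its directed graph strongly connected (Theorem~\ref{Theorem:Path}), which guarantees the existence of arcs; sign-symmetry means $A_{ij} > 0$ if and only if $A_{ji} > 0$, which converts every arc between distinct vertices into a cycle of length $2$.

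First I would dispose of the degenerate case $n = 1$: an irreducible $1 \times 1$ matrix must have its single entry positive, giving a loop of length $1$, so $\gamma = 1$ and $\A$ is primitive. For $n \geq 2$, strong connectivity of the directed graph forces at least one arc between two distinct vertices, i.e.\ $A_{ij} > 0$ for some $i \neq j$. By sign-symmetry $A_{ji} > 0$ as well, so $i \to j \to i$ is a cycle of length $2$ in the directed graph of $\A$. Since $\gamma$ divides the length of every cycle, it divides $2$, hence $\gamma \in \{1, 2\}$: if $\gamma = 1$ the matrix is primitive, and if $\gamma = 2$ it is cyclic of period $2$, which exhausts the two alternatives in the statement.

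I do not anticipate a real obstacle here; the argument is essentially one line once the gcd-of-cycle-lengths characterization is in hand. The only points that warrant a moment's care are the trivial case $n = 1$ and confirming that ``cyclic of period $2$'' is exactly the assertion that the index of imprimitivity equals $2$ (equivalently, that $\A$ is irreducible and imprimitive with a single cycle-length divisor $2$), both of which are immediate from the definitions used earlier in the paper.
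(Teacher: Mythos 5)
Your proof is correct, but it takes a different route from the paper's. You work directly from the characterization of the index of imprimitivity $\gamma$ as the greatest common divisor of all cycle lengths: irreducibility (for $n \geq 2$) forces an arc $i \to j$ with $i \neq j$, sign-symmetry converts it into a $2$-cycle $i \to j \to i$, hence $\gamma \mid 2$ and the dichotomy follows. The paper instead argues by contradiction through the cyclic normal form: it invokes the permutation similarity putting an imprimitive irreducible matrix into the block-cyclic form \eqref{eq:CyclicNormal}, observes that for $\gamma > 2$ this form is necessarily non-symmetric (nonzero blocks sit only on the subdiagonal and in the upper-right corner, positions not preserved by transposition), and concludes that $\P \A \P\tr \neq \P \A\tr \P\tr$ would contradict $\A = \A\tr$. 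Your argument is the more elementary and self-contained of the two --- it needs only the gcd definition of the period already quoted before Theorem~\ref{Theorem:Brualdi+Ryser}, avoids the normal-form machinery entirely, and even handles the $n=1$ edge case explicitly, which the paper passes over. The paper's version has the mild advantage of staying within the same normal-form apparatus it uses in Proposition~\ref{Proposition:Primitivity} and the subsequent Proposition~\ref{Proposition:SimpleGraph}, so the non-symmetry of the $\gamma > 2$ normal form is reused there. Both are valid; yours would be a legitimate drop-in replacement.
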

\begin{proof}
Let $\A$ be the sign pattern matrix for the matrix $\B$.  Clearly both $\B$ and $\A$ share the same properties with respect to being irreducible, primitive, or cyclic.  Since $\A$ is irreducible, either $\A$ is primitive, or it is imprimitive with index of imprimitivity $\gamma \geq 2$.  By \citealt[Sec. 3.3Ð3.4] {Minc:1988}, permutation matrices exist to put $\A$ into a cyclic normal form as in \eqref{eq:CyclicNormal}, and when $\gamma$ is greater than $2$, into a non-symmetric cyclic normal form (with sub-diagonal blocks as can be seen in \eqref {eq:CyclicNormal}), in which case 
\an{\label{eq:PAP}
\P \A \P\tr \neq (\P \A \P\tr)\tr = \P \A\tr \P\tr.
}
But then $\A \neq \A\tr$, contrary to hypothesis.  Therefore either $\gamma = 2$, or $\A$ is primitive.  
\end{proof}

\begin {PropositionNum}\label {Proposition:SimpleGraph}
The adjacency matrix of a connected simple graph is primitive if and only if the graph is not bipartite.
\end {PropositionNum}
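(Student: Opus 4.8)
The plan is to reduce the claim to Proposition~\ref{Proposition:PrimitiveOrBipartite}. Let $G$ be a connected simple graph and $\A$ its adjacency matrix; $\A$ is nonnegative, symmetric, and has zero diagonal, and I may assume $G$ has an edge, i.e.\ $n\ge 2$ (the one-vertex case is degenerate). Connectedness of $G$ is exactly strong connectivity of its directed graph, so $\A$ is irreducible by Theorem~\ref{Theorem:Path}, and $\A$ is trivially sign-symmetric; hence Proposition~\ref{Proposition:PrimitiveOrBipartite} applies and tells me that $\A$ is primitive or cyclic of period~$2$, and these two alternatives are mutually exclusive. The statement therefore follows once I show that $\A$ is cyclic of period~$2$ if and only if $G$ is bipartite.

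For the ``only if'' part I would invoke the cyclic normal form~\eqref{eq:CyclicNormal}: if $\A$ is cyclic of period~$2$, there is a permutation matrix $\P$ with
\ab{
\A = \P\tr \Bmatr{\0 & \B_2 \\ \B_1 & \0}\P,
}
the two diagonal $\0$ blocks being square, of orders $p,q\ge 1$ with $p+q=n$. Since $\P$ merely relabels vertices, letting $V_1, V_2$ be the vertex sets matching these two blocks, vanishing of the diagonal blocks says precisely that $G$ has no edge inside $V_1$ and none inside $V_2$; that is a bipartition, so $G$ is bipartite. For the ``if'' part, assume $G$ is bipartite with parts $V_1, V_2$ and order the vertices so that those of $V_1$ precede those of $V_2$; then $\A = \Bmatr{\0 & \B \\ \B\tr & \0}$ with square zero diagonal blocks, so $\A^{2k}$ has zero off-diagonal blocks and $\A^{2k+1}$ has zero diagonal blocks for every $k\ge 1$. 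Hence no power of $\A$ is strictly positive, $\A$ is not primitive, and by the dichotomy above $\A$ must be cyclic of period~$2$. Combining the two parts with that dichotomy, $\A$ is primitive if and only if it fails to be cyclic of period~$2$, which holds if and only if $G$ is not bipartite.

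I do not anticipate a serious obstacle; the only points needing care are (i) observing that it is exactly connectedness of $G$ that makes $\A$ irreducible, so that Proposition~\ref{Proposition:PrimitiveOrBipartite} applies, and (ii) translating in both directions between ``$G$ has no edge within a vertex class'' and ``the corresponding diagonal block of the cyclic normal form is zero.'' If one prefers to bypass Proposition~\ref{Proposition:PrimitiveOrBipartite}, there is an equally short self-contained route: every edge of $G$ supplies a closed walk of length $2$, so the period $\gamma$ of $\A$ divides $2$; since $\gamma$ is the greatest common divisor of all cycle lengths of $\A$ and a connected graph is bipartite if and only if it has no odd cycle, $\gamma = 1$ (i.e.\ $\A$ is primitive) if and only if $G$ contains an odd cycle, i.e.\ if and only if $G$ is not bipartite.
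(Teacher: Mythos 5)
Your proof is correct and takes essentially the same route as the paper: reduce to Proposition~\ref{Proposition:PrimitiveOrBipartite} and identify ``cyclic of period $2$'' with ``bipartite'' via the cyclic normal form. The only cosmetic difference is that for the direction bipartite $\Rightarrow$ period $2$ the paper argues that symmetry forbids a period greater than $2$, whereas you rule out primitivity directly from the block structure of the powers and then invoke the dichotomy; your closing observation about closed walks of length $2$ is a valid self-contained shortcut but not what the paper does.
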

\begin{proof}
The adjacency matrix, $\A$, of a connected simple graph is an irreducible sign-symmetric nonnegative matrix, so Proposition \ref {Proposition:PrimitiveOrBipartite} applies. The adjacency matrix $\A$ of a bipartite graph can always be permuted into a cyclic normal form of period $2$, hence its period is always divisible by $2$.  But its period cannot be greater than $2$ because then $\A$ could be permuted into a non-symmetric cyclic normal form, contrary to its symmetry.  Therefore it is cyclic of period $2$ if and only if the graph is bipartite.   By Proposition \ref {Proposition:PrimitiveOrBipartite}, if the adjacency matrix is not bipartite, it is thus primitive.
\end{proof}

\begin{CorollaryNum}\label{Proposition:NotBipartite}
The adjacency matrix of a connected simple graph is \TFI\ if and only if the graph is not bipartite.
\end{CorollaryNum}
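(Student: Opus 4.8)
The plan is to obtain this statement by chaining the two immediately preceding propositions, since the adjacency matrix of a connected simple graph satisfies the hypotheses of both. First I would observe that if $\A$ is the adjacency matrix of a connected simple graph, then $\A$ is a nonnegative matrix with $\A = \A\tr$, so in particular $\A$ is sign-symmetric; moreover connectedness of the graph is exactly strong connectivity of the (symmetric) directed graph of $\A$, so $\A$ is irreducible. Hence $\A$ lies in the hypothesis class of both Proposition \ref{Proposition:TIF-Primitive} and Proposition \ref{Proposition:SimpleGraph}.

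Next I would invoke Proposition \ref{Proposition:TIF-Primitive}: a sign-symmetric nonnegative matrix is \TFI\ if and only if it is primitive, so $\A$ is \TFI\ precisely when $\A$ is primitive. Then I would invoke Proposition \ref{Proposition:SimpleGraph}: the adjacency matrix of a connected simple graph is primitive if and only if the graph is not bipartite. Composing the two biconditionals gives that $\A$ is \TFI\ if and only if the graph is not bipartite, which is the claim.

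The main (and essentially only) point requiring care is the verification in the first step that a connected-simple-graph adjacency matrix genuinely meets the hypotheses needed — symmetry yielding sign-symmetry, and connectedness yielding irreducibility — so that both propositions may be applied verbatim. After that, there is no real obstacle: the result follows by transitivity of logical equivalence, which is why it is stated as a corollary rather than a theorem.
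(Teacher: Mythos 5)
Your proposal matches the paper's proof exactly: the paper also obtains the corollary by directly combining Proposition \ref{Proposition:TIF-Primitive} and Proposition \ref{Proposition:SimpleGraph}. Your added verification that a connected simple graph's adjacency matrix is sign-symmetric and irreducible is a reasonable (if implicit in the paper) hypothesis check, and the argument is correct.
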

\begin{proof}
This is a direct consequence of combining Proposition \ref {Proposition:TIF-Primitive} and Proposition \ref {Proposition:SimpleGraph}.
\end{proof}

\Remark
{
Joel E. Cohen (personal communication) wondered how much of a gap there was between Friedland's \citeauthor{Friedland:1981}'s condition that $\A$ be irreducible and have positive diagonal elements, and the condition found here that $\A$ and $\A\tr\A$ be irreducible.   He found that the gap was small --- only one diagonal element, in the case of an $n$-cycle permutation matrix augmented with positive diagonal elements:  one diagonal element may be set to zero while maintaining the irreducibility of $\A\tr\A$, leaving $2n-1$ positive elements;  but two diagonal elements set to zero make $\A\tr\A$ reducible.  
 
The number $2n-1$ can be seen to derive from the requirement that the bipartite graph of $\A$ be connected.  The graph has $2n$ vertices, and $2n-1$ edges are required to connect them.  Any minimally connected graph must be a tree, since an edge that is part of a cycle may be removed without disconnecting the graph.
}	

The terms 
\emph{indecomposable} \citep{Menon:and:Schneider:1969:Spectrum} \citep[p. 329]{Brualdi:2006:Combinatorial} and 
\emph{chainable} \citep{Hartfiel:and:Maxson:1975:Chainable,Hershkowitz:Rothblum:and:Schneider:1988} 
have been used to refer to a matrix whose associated bipartite graph is connected.   The first use of  `chainable' appears to have been by Sinkhorn and Knopp \citet{Sinkhorn:and:Knopp:1969} for square matrices, and is defined to be the case where, for any two nonzero elements $A_{i_1 j_1}, A_{i_k j_k}$, there is a sequence $A_{i_1 j_1}, \ldots, A_{i_k j_k}$ of nonzero elements satisfying $i_t = i_{t{+}1}$ or $j_t = j_{t{+}1}$.  Hartfiel and Maxson \citet{Hartfiel:and:Maxson:1975:Chainable} modify `chainable' to apply to $(0,1)$-matrices of order $m \times n$, excluding matrices with a row or column of all zeros.  In their Theorem 1.2 they show that the bipartite graph associated with the matrix is connected if and only if the matrix is chainable.  In their Lemma 1.1, they show that $\A$ is chainable if and only if no permutation matrices $\P$ and $\Q$ can produce the block form
\ab{
\P \A \Q = \Bmatr{\A_1 & \0\\ \0 & \A_2}.
}
It is this property that Hershkowitz et al.\ \citet[Definition 2.12]{Hershkowitz:Rothblum:and:Schneider:1988} use to actually define chainable matrices.  In an earlier paper \citep{Menon:and:Schneider:1969:Spectrum}, matrices defined by this property are referred  to as `indecomposable', and this usage is maintained in \citet[p. 340]{Brualdi:2006:Combinatorial}.   `Indecomposable' may cause confusion, however, because a matrix such as \eqref{eq:PartDecompTFI} is then an `indecomposable partly-decomposable matrix'.  `Chainable' is free of this seeming contradiction. (Other similarly confusing terminology remains; for example, that irreducible matrices are completely reducible \citep[p. 127]{Eaves:Hoffman:Rothblum:and:Schneider:1985}).

\begin{PropositionNum}
If $\A$ is two-fold irreducible but not fully indecomposable, then there is no doubly stochastic matrix with the same sign pattern as $\A$.
\end{PropositionNum}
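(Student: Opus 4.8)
The plan is to prove the logically equivalent statement: if $\A$ is not fully indecomposable and some doubly stochastic matrix $\Sm$ has the same sign pattern as $\A$, then $\A$ is not two-fold irreducible. Since $\A$ is not fully indecomposable there are permutation matrices $\Pm, \Q$ with
\[
\Pm \A \Q = \Bmatr{\A_1 & \0 \\ \A_{21} & \A_2},
\]
where $\A_1$ has order $p \geq 1$, $\A_2$ has order $q \geq 1$, $p+q = n$, and the upper-right zero block is $p \times q$. Multiplying on the left and right by permutation matrices preserves both double stochasticity and the sign pattern, so $\Pm \Sm \Q$ is doubly stochastic and has the same block-zero structure; I may therefore assume from the outset that $\Sm = \Bmatr{\Sm_1 & \0 \\ \Sm_{21} & \Sm_2}$ is doubly stochastic with $\Sm_1$ of order $p$.

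The core is a counting step on $\Sm$. Summing all entries in the first $p$ rows of $\Sm$ yields $p$, since each of those rows sums to $1$; and because those rows have no entries in the last $q$ columns, the total equals the sum of the entries of $\Sm_1$, so that sum is $p$. Summing all entries in the first $p$ columns of $\Sm$ likewise yields $p$, and those columns draw their entries from $\Sm_1$ and from $\Sm_{21}$; hence (sum of entries of $\Sm_1$) $+$ (sum of entries of $\Sm_{21}$) $= p$. Comparing, the entries of $\Sm_{21}$ sum to zero, so $\Sm_{21} = \0$ by nonnegativity. Since $\A$ has the same sign pattern as $\Sm$, this forces $\A_{21} = \0$, so $\Pm \A \Q = \A_1 \oplus \A_2$ is block diagonal.

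To conclude, I invoke Theorem \ref{Theorem:Two-Fold}. The simple bipartite graph of $\A$ is graph-isomorphic to that of $\Pm \A \Q$, because two-sided multiplication by permutation matrices merely relabels the row-vertices $\Xc$ and the column-vertices $\Yc$. The bipartite graph of the direct sum $\A_1 \oplus \A_2$ is the disjoint union of the bipartite graphs of $\A_1$ and of $\A_2$; since $p \geq 1$ and $q \geq 1$, each of these carries at least one vertex while no edge runs between them, so this bipartite graph is disconnected. Therefore the bipartite graph of $\A$ is disconnected, and by the equivalence of statements \ref{item:A-AtA} and \ref{item:Connected} in Theorem \ref{Theorem:Two-Fold}, $\A$ is not two-fold irreducible, as required.

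I expect the only point needing care to be the reduction that puts $\Sm$ (and $\A$) into block-triangular form: one must check that the two-sided permutation preserves double stochasticity and that the upper-right zero block is genuinely common to $\A$ and $\Sm$. Everything after that --- the entry-counting and the bipartite-graph conclusion --- is elementary given Theorem \ref{Theorem:Two-Fold}. An alternative to the counting step would be to cite the classical fact that a matrix with the sign pattern of a doubly stochastic matrix has total support and hence decomposes, under row and column permutations, into a direct sum of fully indecomposable components (see \citet{Brualdi:2006:Combinatorial}); not being fully indecomposable would then force at least two such components and again disconnect the bipartite graph, but the self-contained counting argument seems cleaner.
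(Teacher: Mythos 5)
Your proof is correct, but it takes a genuinely different route from the paper. The paper disposes of this proposition in one step by quoting Lemma 1 of \citet{Sinkhorn:and:Knopp:1969} --- a nonnegative matrix is fully indecomposable if and only if it is chainable and has doubly stochastic pattern --- and then observing that two-fold irreducible matrices are chainable, so a partly decomposable one cannot have doubly stochastic pattern. You instead prove the needed direction of that lemma from scratch: the row-sum/column-sum count showing that a doubly stochastic matrix in block-triangular form $\Bmatr{\Sm_1 & \0 \\ \Sm_{21} & \Sm_2}$ must have $\Sm_{21} = \0$ is exactly the classical argument that a partly decomposable doubly stochastic pattern splits into a nontrivial direct sum, and from there the disconnection of the bipartite graph and the appeal to the equivalence of items \ref{item:A-AtA} and \ref{item:Connected} in Theorem \ref{Theorem:Two-Fold} is sound (permutations on the two sides only relabel the $\Xc$ and $\Yc$ vertices, so connectivity is preserved). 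What your version buys is self-containment --- the reader needs no external lemma, only Theorem \ref{Theorem:Two-Fold} --- at the cost of a paragraph of elementary counting; what the paper's version buys is brevity and an explicit pointer to the chainability literature, which the surrounding discussion is at pains to connect to two-fold irreducibility. Your closing remark correctly identifies the total-support route as essentially the paper's citation-based argument in disguise.
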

\begin{proof}
This statement is simply a contrapositive of \citet[Lemma 1]{Sinkhorn:and:Knopp:1969}: A nonnegative matrix $\A$ is fully indecomposable if and only if it is chainable and has doubly stochastic pattern.   Hence, if a matrix is not fully indecomposable, one or the other of the two properties must be violated;  since two-fold irreducible matrices are chainable, then the sign pattern must not be doubly stochastic if the matrix is to be partly decomposable.  
\end{proof}

A memorable way to characterize chainable matrices introduced by Sinkhorn and Knopp \citep[p. 68] {Sinkhorn:and:Knopp:1969} is that a path can be made between any two nonzero elements by moving as a rook does in chess from one nonzero element to another.  Irreducible matrices can be characterized in a corresponding way with the following kind of move: 
\begin{PropositionNum}[Board Moves for Irreducibility]
For a square matrix, starting with one nonzero element, let a sequence of nonzero elements be generated using moves with the following structure:
\enumlist{
\item\  move to the reflection of the element's position across the diagonal;\label{item:reflect}
\item\  move horizontally to a nonzero element.\label{item:horizontal}
}
A matrix is irreducible if and only there is a sequence of such moves from any nonzero element to any other nonzero element, and every row and every column has a nonzero element.  Equivalently, move \ref{item:horizontal} may be replaced with all vertical moves.
\end{PropositionNum}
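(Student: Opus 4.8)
The plan is to recognize a ``board move'' as one link of a chain of nonzero elements and then invoke Theorem~\ref{Theorem:Path}. I first make the move precise. Reading the pattern ``do move~\ref{item:reflect}, then move~\ref{item:horizontal}'' as a single compound move, a move takes a nonzero entry $A_{ij}$ to a nonzero entry $A_{jk}$: the token reflects from position $(i,j)$ to $(j,i)$ --- which may itself be zero --- and then slides along row $j$ to any $k$ with $A_{jk}\neq 0$. Performing the move requires $A_{ij}\neq 0$, so it amounts to traversing the link ``$i\to j$'' of a chain of nonzero elements in the sense of Theorem~\ref{Theorem:Path}, after which the entry $A_{jk}$ is positioned to supply the next link. (The reflection must be permitted to pass through a zero: the $3$-cycle permutation matrix is irreducible yet has no move whose reflection lands on a nonzero entry.) I expect this translation to be the only subtle point; the remainder is bookkeeping together with a couple of degenerate cases --- the $1\times 1$ matrix, handled by the convention that $[0]$ is reducible, and the cases where the two chosen entries already share a row or column.

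For the forward implication, assume $\A$ is irreducible. Applying Theorem~\ref{Theorem:Path} to any pair $(i,j)$ with $j\neq i$ gives a chain whose first element lies in row $i$ and whose last lies in column $j$, so every row and every column of $\A$ has a nonzero entry. Given nonzero entries $A_{ab}$ and $A_{cd}$: if $b=c$, a single move carries $A_{ab}=A_{ac}$ to $A_{cd}$; otherwise Theorem~\ref{Theorem:Path} supplies a chain of nonzero entries from $b$ to $c$, say $A_{b t_1},A_{t_1 t_2},\dots,A_{t_{m-1} c}$ (which is just $A_{bc}$ when $m=1$), and then the sequence $A_{ab}\to A_{b t_1}\to A_{t_1 t_2}\to\cdots\to A_{t_{m-1} c}\to A_{cd}$ is a legal board path, since every entry listed is nonzero and serves as the row index of the next move. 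Hence any two nonzero entries are board-connected.

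For the converse, assume every row and column has a nonzero entry and that any two nonzero entries are board-connected. Fix a pair $(i,j)$, pick a nonzero $A_{ip}$ in row $i$ and a nonzero $A_{qj}$ in column $j$, and take a board path $A_{ip}=A_{r_0 s_0},A_{r_1 s_1},\dots,A_{r_N s_N}=A_{qj}$. Each move forces $r_{t+1}=s_t$ and keeps every listed entry nonzero, so the entries $A_{i p},A_{p s_1},A_{s_1 s_2},\dots,A_{s_{N-1} j}$ form a chain of nonzero elements from $i$ to $j$ (when $N=0$ this degenerates to $A_{ij}=A_{ip}\neq 0$). Since $(i,j)$ was arbitrary, Theorem~\ref{Theorem:Path} shows $\A$ is irreducible. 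Finally, the ``all vertical moves'' variant is equivalent to the horizontal one applied to $\A\tr$ --- transposing positions turns the vertical move on $\A$ into the horizontal move on $\A\tr$, and the row/column hypothesis is symmetric --- so it too characterizes irreducibility, because $\A\tr$ is irreducible iff $\A$ is.
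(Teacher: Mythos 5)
Your proposal is correct and follows essentially the same route as the paper's own proof: both directions translate a board path into a chain of nonzero elements $(A_{k_1 k_2}, A_{k_2 k_3}, \ldots)$ and invoke Theorem~\ref{Theorem:Path}, with the vertical-move variant handled by passing to $\A\tr$. Your explicit remark that the reflected position $(j,i)$ may itself be zero, and your handling of the degenerate cases, are welcome clarifications of points the paper leaves implicit, but they do not change the argument.
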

\begin{proof}
Starting from nonzero element $A_{k_1, k_2}$, reflection across the diagonal means going from position $(k_1, k_2)$ to $(k_2, k_1)$.  The horizontal move then takes one from $(k_2, k_1)$ to a nonzero element $A_{k_2, k_3}$ if such exists.  Reflection takes one to $(k_3, k_2)$, and the next horizontal move takes one to a nonzero element $A_{k_3, k_4}$, etc..  The sequence of nonzero elements generated by moves \ref{item:reflect} and \ref{item:horizontal} therefore has the form $(A_{k_1 k_2}, A_{k_2 k_3}, \ldots, A_{k_{p-1} k_p})$.    

Suppose that such a sequence exists from any nonzero $A_{i_1 j_1}$ to any nonzero $A_{i_2 j_2}$ and that every row and every column has at least one nonzero element.  Then for any pair $(i,j)$ there are nonzero elements $A_{i h}$ and $A_{k j}$ for some $h, k \in 1, \ldots, n$.  Since there is a sequence of nonzero elements between $A_{i h}$ and $A_{k j}$, the condition for irreducibility in Theorem \ref{Theorem:Path} is met.

Conversely, suppose that $\A$ is irreducible.  Since there is a sequence of nonzero elements from every $i$ to every $j$, there must be a nonzero element in each row $i$ and each column $j$.  Suppose that $A_{i_1 i_2}$  and $A_{j_1 j_2}$ are nonzero.  By Theorem \ref{Theorem:Path} there is a path of nonzero elements from $i_2$ to $j_1$, $(A_{i_2 k_1}, A_{k_1 k_2}, \ldots, A_{k_p j_1})$.  This path joins $A_{i_1 i_2}$  and $A_{j_1 j_2}$ to create $(A_{i_1 i_2}, A_{i_2 k_1}, A_{k_1 k_2}, \ldots, A_{k_p j_1}, A_{j_1 j_2})$, which shows that any pair of nonzero elements $A_{i_1 i_2}$  and $A_{j_1 j_2}$ can be connected by a sequence of nonzero elements as generated by moves \ref{item:reflect} and \ref{item:horizontal}.

A sequence of moves on $\A$ is equivalent to a sequence of moves on $\A\tr$ where horizontal moves are replaced by vertical moves for step \ref{item:horizontal}.  Since $\A$ is irreducible if and only if $\A\tr$ is irreducible, horizontal moves in step \ref{item:horizontal} may be replaced by vertical moves in the Proposition.
\end{proof}

Useful reviews of the properties of chainable matrices can be found in \citet[Chapter 5]{Sachkov:and:Tarakanov:2002} and \citep[Chapter 8]{Brualdi:2006:Combinatorial}.   Chainable matrices are also to be found under the rubric of `transportation polytopes', and in particular are a means to  characterize the \emph{nondegenerate polytopes} \citep[Chapter 8]{Brualdi:2006:Combinatorial}.  The extreme points of the polytope of square matrices have $2n-1$ positive elements (\citealt [p. 160]{Sachkov:and:Tarakanov:2002}; \citealt[p. 340]{Brualdi:2006:Combinatorial}).

The \TFI\ matrices that appear here are the intersection between the chainable and the irreducible nonnegative matrices.  Further characterization of  this intersection may prove of value.

\section*{Acknowledgements}
Thanks go to Joel E. Cohen for sharing with me the open questions from which this paper grew, and for his comments;  to Roger Nussbaum for his definitive treatment which provided the key tools used here; to Shmuel Friedland for referring Joel Cohen to me; to each for populating the field with so many fruitful results; and to Laura Marie Herrmann for assistance with the literature search.


\begin{thebibliography}{10}

\bibitem{Berman:and:Grone:1988}
{\sc A.~Berman and R.~Grone}, {\em Bipartite completely positive matrices},
  Proceedings of the Cambridge Philosophical Society, 103 (1988), pp.~269--276.

\bibitem{Berman:and:Plemmons:1994}
{\sc A.~Berman and R.~J. Plemmons}, {\em Nonnegative Matrices in the
  Mathematical Sciences}, Society for Industrial and Applied Mathematics
  (SIAM), Philadelphia, 2nd~ed., 1994.

\bibitem{Boche:and:Stanczak:2008:Strict}
{\sc H.~Boche and S.~Stanczak}, {\em Strict convexity of the feasible log-{SIR}
  region}, {IEEE} Transactions on Communications, 56 (2008), pp.~1511--1518.

\bibitem{Brualdi:2006:Combinatorial}
{\sc R.~Brualdi}, {\em Combinatorial Matrix Classes}, vol.~108 of Encyclopedia
  of Mathematics and Its Applications, Cambridge University Press, Cambridge,
  2006.

\bibitem{Brualdi:and:Ryser:1991}
{\sc R.~Brualdi and H.~Ryser}, {\em Combinatorial Matrix Theory}, vol.~39 of
  Encyclopedia of Mathematics and its Applications, Cambridge University Press,
  Cambridge, 1991.

\bibitem{Cohen:1981:Convexity}
{\sc J.~E. Cohen}, {\em Convexity of the dominant eigenvalue of an essentially
  nonnegative matrix}, Proceedings of the American Mathematical Society, 81
  (1981), pp.~657--658.

\bibitem{Cohen:2012:Cauchy}
\leavevmode\vrule height 2pt depth -1.6pt width 23pt, {\em Cauchy inequalities
  for the spectral radius of products of diagonal and nonnegative matrices}.
\newblock Submitted.
  \href{mailto:cohen@rockefeller.edu?subject=Cauchy-Inequalities}
  {cohen@rockefeller.edu}, 2012.

\bibitem{Cohen:Friedland:Kato:and:Kelly:1982}
{\sc J.~E. Cohen, S.~Friedland, T.~Kato, and F.~P. Kelly}, {\em Eigenvalue
  inequalities for products of matrix exponentials}, Linear Algebra and Its
  Applications, 45 (1982), pp.~55--95.

\bibitem{Eaves:Hoffman:Rothblum:and:Schneider:1985}
{\sc B.~Eaves, A.~Hoffman, U.~Rothblum, and H.~Schneider}, {\em
  Line-sum-symmetric scalings of square nonnegative matrices}, Mathematical
  Programming Study, 25 (1985), pp.~124--141.

\bibitem{Friedland:1981}
{\sc S.~Friedland}, {\em Convex spectral functions}, Linear and Multilinear
  Algebra, 9 (1981), pp.~299--316.

\bibitem{Gantmacher:1959vol2}
{\sc F.~R. Gantmacher}, {\em The Theory of Matrices}, vol.~2, Chelsea
  Publishing Company, New York, 1959.

\bibitem{Hajnal:1958:Weak}
{\sc J.~Hajnal}, {\em Weak ergodicity in non-homogeneous markov chains},
  Proceedings of the Cambridge Philosophical Society, 54 (1958), pp.~233--246.

\bibitem{Hartfiel:and:Maxson:1975:Chainable}
{\sc D.~Hartfiel and C.~Maxson}, {\em The chainable matrix, a special
  combinatorial matrix}, Discrete Mathematics, 12 (1975), pp.~245--256.

\bibitem{Hershkowitz:Rothblum:and:Schneider:1988}
{\sc D.~Hershkowitz, U.~Rothblum, and H.~Schneider}, {\em Classifications of
  nonnegative matrices using diagonal equivalence}, SIAM Journal of Matrix
  Analysis and Applications, 9 (1988), pp.~455--460.

\bibitem{Hogben:2007}
{\sc L.~Hogben}, ed., {\em Handbook of Linear Algebra}, Chapman and Hall, 2007.

\bibitem{Kantrowitz:Neumann:and:Ransford:2011}
{\sc R.~Kantrowitz, M.~Neumann, and T.~Ransford}, {\em Regularity, scrambling,
  and the steady state for stochastic matrices}, in Function Spaces in Modern
  Analysis: Sixth Conference on Function Spaces, May 18-22, 2010, Southern
  Illinois University, Edwardsville, K.~Jarosz, ed., vol.~547, Providence, RI,
  2011, American Mathematical Society, pp.~153--164.

\bibitem{Kingman:1961:Convexity}
{\sc J.~F.~C. Kingman}, {\em {A convexity property of positive matrices}}, The
  Quarterly Journal of Mathematics, 12 (1961), pp.~283--284.

\bibitem{Menon:and:Schneider:1969:Spectrum}
{\sc M.~Menon and H.~Schneider}, {\em The spectrum of a nonlinear operator
  associated with a matrix}, Linear Algebra Appl, 2 (1969), pp.~321--334.

\bibitem{Minc:1988}
{\sc H.~Minc}, {\em Nonnegative Matrices}, John Wiley and Sons, New York, 1988.

\bibitem{Nussbaum:1986:Convexity}
{\sc R.~D. Nussbaum}, {\em Convexity and log convexity for the spectral
  radius}, Linear Algebra and Its Applications, 73 (1986), pp.~59--122.

\bibitem{Sachkov:and:Tarakanov:2002}
{\sc V.~Sachkov and V.~Tarakanov}, {\em Combinatorics of nonnegative matrices},
  vol.~213, Amer Mathematical Society, 2002.

\bibitem{Seneta:2006}
{\sc E.~Seneta}, {\em Non-negative Matrices and {Markov} Chains},
  Springer-Verlag, New York, revised~ed., 2006.

\bibitem{Sinkhorn:and:Knopp:1969}
{\sc R.~Sinkhorn and P.~Knopp}, {\em Problems involving diagonal products in
  nonnegative matrices}, Transactions of the American Mathematical Society, 136
  (1969), pp.~67--75.

\end{thebibliography}
\end{document}